\DeclareMathOperator{\am}{am}
\dedicatory{}
\theoremstyle{plain}
\newtheorem{thm}[equation]{Theorem}
\newtheorem{cor}[equation]{Corollary}
\newtheorem{lem}[equation]{Lemma}
\newtheorem{example}[equation]{Example}
\newtheorem{conjecture}[equation]{Conjecture}
\theoremstyle{definition}
\newtheorem{rem}[equation]{Remark}
\theoremstyle{remark}
\newtheorem{nonsec}[equation]{}
\numberwithin{equation}{section}
\newcommand{\beq}{\begin{equation}}
\newcommand{\eeq}{\end{equation}}
\newcommand{\ben}{\begin{enumerate}}
\newcommand{\een}{\end{enumerate}}
\newcommand{\bequu}{\begin{eqnarray*}}
\newcommand{\eequu}{\end{eqnarray*}}
\newcommand{\bequ}{\begin{eqnarray}}
\newcommand{\eequ}{\end{eqnarray}}
\newcommand{\B}{\mathbb{B}^2}
\newcommand{\R}{\mathbb{R}^2}
\renewcommand{\th}{\,\textnormal{th}}
\renewcommand{\Im}{{ \rm Im}\,}
\renewcommand{\Re}{{ \rm Re}\,}
\newcommand{\uhp}{\mathbb{H}}
\renewcommand{\th}{\mathrm{th}\,}
\font\fFt=eusm10 
\font\fFa=eusm7 
\font\fFp=eusm5 
\def\K{\mathchoice
{\hbox{\,\fFt K}}
{\hbox{\,\fFt K}}
{\hbox{\,\fFa K}}
{\hbox{\,\fFp K}}}
\newcommand{\sn}{\mathrm{sn}\,}
\newcommand{\cn}{\mathrm{cn}\,}
\newcommand{\dn}{\mathrm{dn}\,}
\def\Re{\operatorname{Re}}
\def\Im{\operatorname{Im}}
\newcounter{minutes}
\newcounter{hours}
\begin{document}
\thispagestyle{empty}
\def\thefootnote{}

\title[Intrinsic metrics in polygonal domains]{Intrinsic metrics in polygonal domains}
\author[D.~Dautova]{D.~Dautova}
\address{Institute of Mathematics and Mechanics, Kazan Federal University, 420008 Kazan, Russia}
\email{dautovadn@gmail.com}
\author[R.~Kargar]{R.~Kargar}
\address{Department of Mathematics and Statistics, University of Turku, FI-20014 Turku, Finland}
\email{rakarg@utu.fi}
\author[S.~Nasyrov]{S.~Nasyrov}
\address{Institute of Mathematics and Mechanics, Kazan Federal University, 420008 Kazan, Russia}
\email{semen.nasyrov@yandex.ru}
\author[M.~Vuorinen]{M.~Vuorinen}
\address{Department of Mathematics and Statistics, University of Turku, FI-20014 Turku, Finland}
\email{vuorinen@utu.fi}

\begin{abstract}
We study inequalities between the hyperbolic metric and intrinsic metrics
in convex polygonal domains in the complex plane. Special attention is paid to the triangular ratio metric in rectangles.
A local study leads to an investigation of the relationship between the conformal radius at an arbitrary point of a planar domain and the distance of the point to the boundary.
\end{abstract}
\keywords{Hyperbolic metric, intrinsic metric, the triangular ratio metric, conformal radius, convex domain, distance to the boundary.}
\subjclass[2010]{51M09, 51M16, 30C20}

\maketitle

\footnotetext{\texttt{{\tiny File:~\jobname .tex, printed: \number\year-
\number\month-\number\day, \thehours.\ifnum\theminutes<10{0}\fi\theminutes}}}
\makeatletter

\makeatother


\section{Introduction}\label{Sec-Introdiction}
During the past few decades there has been considerable interest
in the study of metrics defined in subdomains $G$ of $\mathbb{R}^n, n\ge 2.$
In geometric function theory the most useful metrics are
\emph{intrinsic metrics}. Distances in these metrics between two points
measure not only how far the points are from each other, but also how close
they are to the boundary of the domain. The hyperbolic distance
$\rho_G(x,y),  x,y \in G,$
of a planar domain $G,$ is ideal for this purpose, in particular, because of its conformal
invariance. There are many reason to study metrics: A.
Papadopoulos \cite[pp.42-48]{p} lists twelve different metrics commonly
used in geometric function theory.
We mention two factors motivating this research:
\begin{itemize}
\item[(a)] In the higher dimensional case $n\ge 3$
there is no metric, equally flexible as the hyperbolic metric is in the planar case.
Thus many authors have introduced  metrics of hyperbolic-type which share some but not all
properties of the hyperbolic metric \cite{hkv}.
\item[(b)] In the planar case,
estimating the  hyperbolic distances of a given planar domain is a
difficult problem \cite{bm}. Therefore, finding concrete estimates for the
hyperbolic metric in terms of simpler metric is needed; these estimates
should take into account both the geometric structure and the metric properties
of the boundary.
\end{itemize}

We study here the topic (b), continuing the earlier work \cite{dnrv,r, rv,hkv}.
 In particular, we investigate the case when the
domain is a polygonal domain in the plane and compare the value of the
hyperbolic metric to the triangular ratio metric $s_G$ \cite{hkv},
see Section \ref{Sec-Preliminaries} for the definition.
In the case when the polygonal domain $G$ is a rectangle,
our main result is Theorem \ref{infinites}. We also formulate
a conjecture about the sharp constant in this theorem
(Conjecture \ref{sharpConst}).

It also turns out that the methodology of our proofs leads to the study of the
\emph{conformal radius} of a domain and its connection with the distance to the boundary. In Section~\ref{confradmax} we study some concrete domains and find the maximum of their conformal radii. In Section~\ref{compar} we investigate the ratio $2 d_G(u)/r_G(u)$ for $u\in G$, where $G$ is a polygonal domain, $d_G(u)=\mbox{\rm dist}(u,\partial G)$ and $r_G(u)$ is the conformal radius of $G$ at $u$. Also we prove that the maximum of the ratio is attained on some graph which consists of those points $u\in G$ for which $d_G(u)=|u - z_j|$, $j=1,2$ for at least two distinct points $z_1\neq z_2$, $z_j\in \partial G$ (Theorem~\ref{ngon}).
Then we give some examples of polygonal domains and calculate the maximal value of $2 d_G(u)/r_G(u)$; the obtained values give the lower estimates for the best constant $C=C(G)$ in the inequality
$$
{\th\frac{\rho_G(u,v)}{2}}  \le C{s_G(u,v)},\quad  u, v\in G.
$$



\section{Preliminaries}\label{Sec-Preliminaries}
Let  $\overline{\mathbb{C}}= {\mathbb{C}} \cup \{\infty\}$ be the extended complex number. Also let $\mathbb{B}^2$ be the unit disk and $ \mathbb{H}^2$ be the
upper half plane in the complex plane $\mathbb{C}$.
\begin{nonsec}{\bf M\"obius transformations.}\label{mymob}
A M\"obius transformation is a mapping of the form
$$z \mapsto \frac{az+b}{cz+d}, \quad a,b,c,d,z \in {\mathbb C},
\ ad-bc\neq 0.$$
The special M\"obius transformation \cite{a}
\begin{equation}\label{myTa}
T_a(z) = \frac{z-a}{1- \overline{a}z}, \quad a \in \B \setminus \{0\},
\end{equation}
maps the unit disk $\mathbb{B}^2$ onto itself with
$T_a(a) =0, T_a( \pm a/|a|)= \pm a/|a|.$
\end{nonsec}

Suppose that $G$ is a proper subdomain in $\R$.
Denote  by $d_G(x)$ the Euclidean distance
dist$(x,\partial G)=
\inf\{|x-z|\text{ }:\text{ }z\in\partial G\}$ between the point $x$
and the boundary of $G$.
The triangular ratio metric $s_G:G\times G\to[0,1]$ is defined by \cite{hkv,rv}
\begin{equation}\label{s}
s_G(x,y)=\frac{|x-y|}{\inf_{z\in\partial G}(|x-z|+|z-y|)},\quad x,y \in G.
\end{equation}

Using the above notation, we can also record the formulas for the
hyperbolic metric $\rho_G$ in these domains $G\in\{\uhp^2,\B\}$ as \cite{b,bm},
 \cite[(4.8), p. 52 \& (4.14), p. 55]{hkv}
\begin{align*}
\text{ch}\rho_{\uhp^2}(x,y)&=
1+\frac{|x-y|^2}{2d_{\uhp^2}(x)d_{\uhp^2}(y)},\quad x,y\in\uhp^2,\\
\text{sh}^2\frac{\rho_{\B}(x,y)}{2}&=
\frac{|x-y|^2}{(1-|x|^2)(1-|y|^2)},\quad x,y\in\B.
\end{align*}
Note that the hyperbolic metric enjoys the following
\emph{conformal invariance} property:
If $G$ is a domain and $f:G\to G'=f(G)$ is a conformal mapping, then
\begin{align*}
\rho_G(x,y)=\rho_{G'}(f(x),f(y)).
\end{align*}
Thus, the hyperbolic metric $\rho_G$ can be defined in any planar
simply connected
domain in terms of a conformal mapping of the domain onto
the unit disk \cite[Thm 6.3 p. 26]{bm}. In particular, the
hyperbolic metric is invariant under {M\"obius transformations}.
In terms of complex numbers, the formulas of the  hyperbolic metric for
the two cases $\uhp^2$ and $\B$ can be simplified to
\begin{align*}
\text{th}\frac{\rho_{\uhp^2}(x,y)}{2}&=\text{th}\left(\frac{1}{2}
\log\left(\frac{|x-\overline{y}|+|x-y|}{|x-\overline{y}|-|x-y|}\right)\right)
=\left|\frac{x-y}{x-\overline{y}}\right|,\\
\text{th}\frac{\rho_{\B}(x,y)}{2}&=\text{th}\left(\frac{1}{2}
\log\left(\frac{|1-x\overline{y}|+|x-y|}{|1-x\overline{y}|-|x-y|}\right)\right)
=\left|\frac{x-y}{1-x\overline{y}}\right|,
\end{align*}
where $\overline{y}$ is the complex conjugate of $y$.

\begin{nonsec}{\bf Elliptic integrals and Jacobian elliptic functions.}
Let
$\K(\lambda),0< \lambda< 1,$ denote the complete elliptic integral of the
first kind \cite{avv},
\begin{equation} \label{Kdef}
\K(\lambda)=\int_0^1\frac{{\rm d}t}{\sqrt{(1-t^2)(1-
\lambda^2 t^2)}}, \quad \K'(\lambda)=\K(\sqrt{1-\lambda^2}).
\end{equation}
Let $\sn(\cdot,\lambda)$,
$\cn(\cdot,\lambda)$ and $\dn(\cdot,\lambda)$
denote the Jacobi elliptic functions. We recall that, by definition,
$\sn(z,\lambda)=\sin \am(z,\lambda)$, $\cn(z,\lambda)=\cos \am(z,\lambda)$  and $\dn(z,\lambda)=\frac{d}{dz}\am(z,\lambda)$ where $\am(z,\lambda)$ is the Jacobi amplitude, i.e. the function $\varphi=\am(z,\lambda)$ inverse to the incomplete elliptic integral of the
first kind \cite{af}
$$
z=\int_0^\varphi\frac{{\rm d} \theta}{\sqrt{1-\lambda^2\sin^2\theta}}.
$$
Elliptic integrals and elliptic functions occur in conformal mapping
of a rectangle onto the upper half plane \cite[p. 358]{af}.

\end{nonsec}

\begin{nonsec}{\bf Conformal radius.}
The \emph{conformal radius} $r_D(z_0)$ of a simply connected domain $D$
with a nondegenerate boundary at a point $z_0\neq \infty$ is a
well-known characteristic playing an important role in geometric
function theory and applications. By definition, $r_D(z_0)$ is the
radius of a disk centered at the origin which can be mapped onto $D$
conformally by a function $F$  with the normalization $F(0)=z_0$ and $F'(0)>0$
 (see, e.g. \cite{gol,ps,bf,bkn,sol}).
Let $g:D\to \mathbb{B}^2$ be a conformal mapping of $D$ onto
$\mathbb{B}^2$ and $f$ be its inverse. If $g(z_0)=\zeta$, then \cite[p. 162]{ps}
\begin{equation}\label{crad1}
r_D(z_0)=|f'(\zeta)|(1-|\zeta|^2)=\frac{1-|g(z_0)|^2}{|g'(z_0)|}
\end{equation}
and
if $G:D\to \mathbb{H}^2$ is a conformal mapping of $D$ onto the upper
 half plane, $F$ is its  inverse and $G(z_0)=w$, then
\begin{equation}\label{crad2}
r_D(z_0)=2\Im w\,|F'(w)|=\frac{2\Im G(z_0)}{|G'(z_0)|}.
\end{equation}

It is well known that the conformal radius $r_D(z_0)$ and the distance to the boundary $d_D(z_0)$ are equivalent in the sense that for every simply connected domain $D$
\begin{equation}\label{rd4}
d_D(z_0)\le r_D(z_0)\le 4 d_D(z_0).
\end{equation}
The left estimation follows from the Schwarz lemma and the right one is a corollary of the Koebe quarter theorem. It should be noted that both inequalities \eqref{rd4} above are sharp. The first inequality is sharp when $f(z)=z$ and $D={\mathbb{B}^2}$. Also the Koebe function $k(z):=z/(1-z)^2$ exhibits equality in the second inequality. Since $k(z)$ maps the open unit disk ${\mathbb{B}^2}$ onto $D:=\mathbb{C}\setminus (-\infty,-1/4]$, we get $r_D(0)=1=4d_D(0)$.

Moreover, for a convex domain $D$ and an arbitrary point  $z_0\in D$
\begin{equation}\label{rd2}
d_D(z_0)\le r_D(z_0)\le 2 d_D(z_0)
\end{equation}
and both the inequalities are sharp; in the first inequality, by the Schwarz lemma,
the equality holds if and only if $D$ is a disk and $z$ is
its center; in the second inequality we have equality if and
only if $D$ is a half-plane and $z$ is its arbitrary point (see, e.g. \cite[Ch.2, \S~2.5, Thrm~2.15]{duren}, \cite[Ch.1, Sect~1.4, Thrm~1.8]{hayman}).
\end{nonsec}



\section{Nearest point for a rectangle}

Consider the rectangle $R$ with vertices $\pm k\pm i$, where $k\ge1$.
From the definition \eqref{s} of $s$-metric we see that if
a point $x\in R$ is fixed then the infimum
$\inf_{z\in\partial G}(|x-z|+|z-y|)$ is attained at some points $z$
lying on a side of $\partial R$.  This side depends on the location of $y$.

In Fig. \ref{near} we show, for a fixed $x \in R,$ the subdomains of $R$ such that if $y$
belongs to one of these subdomains, then  $z$ belongs to the
corresponding side of $R$ which is a common side of $R$ and of the subdomain.
Among the four  subdomains, two are
trapezoids and other two are triangles. The lower and the upper
subdomains are trapezoids if and only if $\Re x^2-\Im x^2< k^2-1$;
in the case $\Re x^2-\Im x^2> k^2-1$ trapezoids are the left and
right subdomains. The case $\Re x^2-\Im x^2= k^2-1$ corresponds to a
subdivision of $R$ into four triangles. In Fig. \ref{near} we show
a subdivision for $k=1.4$, $x=0.7-0.4i$.

\begin{figure}[ht] \centering
\includegraphics[width=3.in]{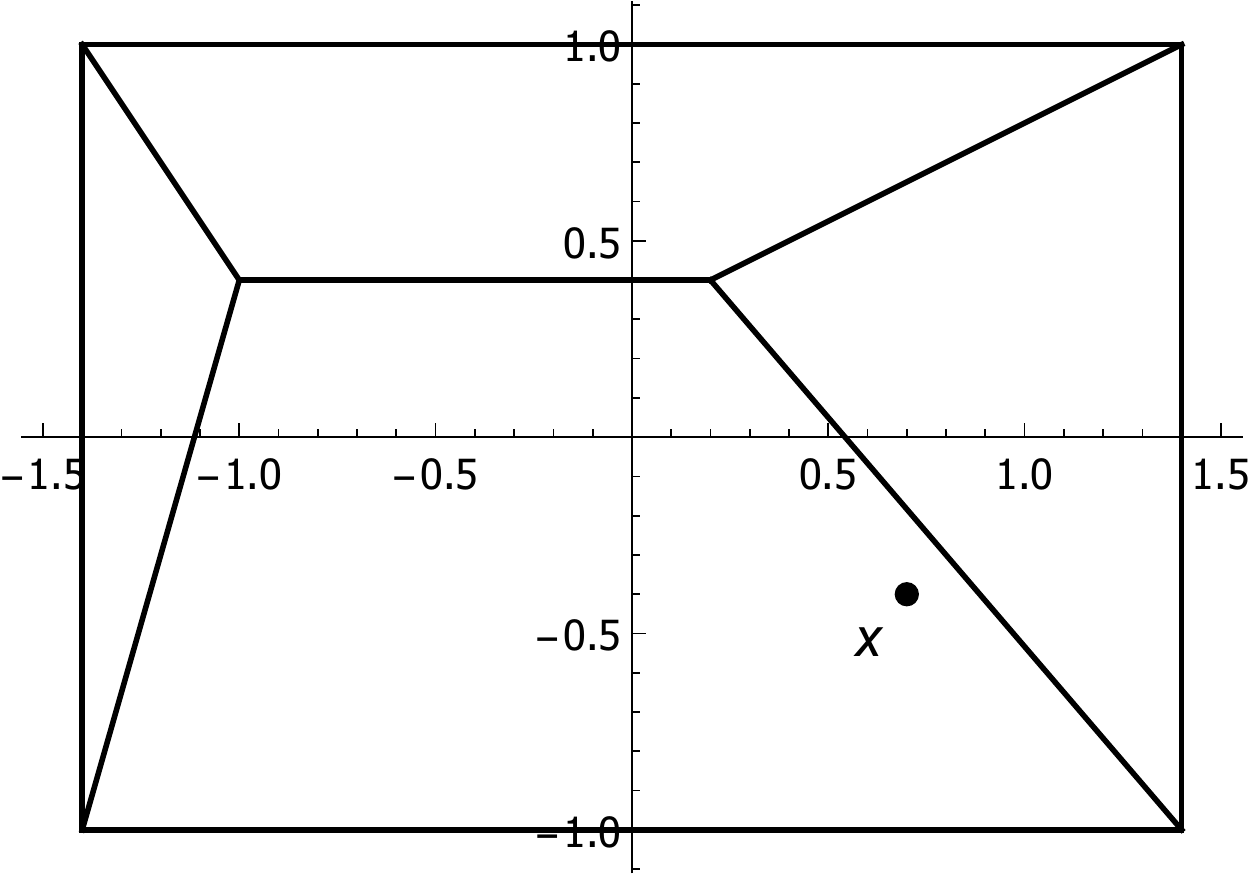}
\caption{For a fixed point  $x$ in a rectangle, there are four subdomains
with the following property.
If $y$ is a point of a subdomain, then  the  point $z$ providing the minimum
of $|x-z|+|z-y|$ is on the common side
 of the subdomain and of the rectangle.}
\label{near}
\end{figure}


We can also describe the geometry of disks and circles in the $s$-metric.
Every circle centered at $x$ of radius $r$ is either a Euclidean circle
or a piecewise smooth Jordan curve.
In Fig. \ref{levels} we show such curves for the case  $k=1.4$,
$x=0.7-0.4i$, $r=0.1j$, $1\le j\le 9$.  We note that the corner points
of these curves are on the segments separating $R$ into four parts
described above (see Fig. \ref{near}). Besides, the disks are convex
sets in the Euclidean metric.

\begin{figure}[ht] \centering
\includegraphics[width=3.5 in]{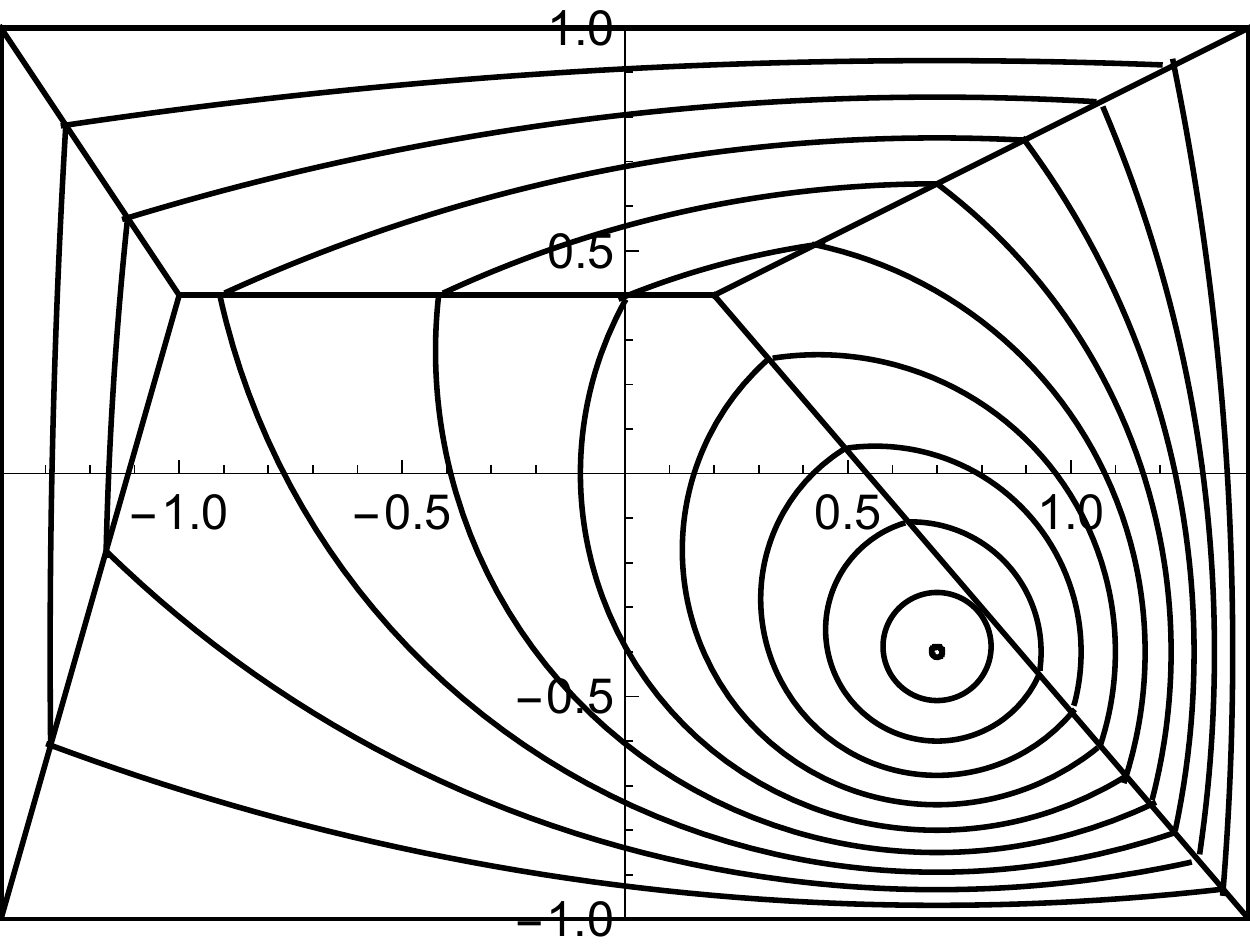}
\caption{Some $s$-metric circles in the rectangle
$R=[-1.4,1.4]\times[-1,1]$ centered
at $x=0.7-0.4i$ with radii $r=0.1j$, $1\le j\le 9$.}\label{levels}
\end{figure}



\section{Hyperbolic metric versus triangle ratio metric in rectangle}\label{rectht}


Our goal is to find for the rectangle $R$ the best constants $C_1,C_2>0$ such that  the inequality
\begin{equation}\label{C12}
C_1{s_R(u,v)}\le{\th\frac{\rho_R(u,v)}{2}}  \le C_2{s_R(u,v)},\quad  u, v\in R
\end{equation}
holds.
Now we will find the limit
$$
\lim_{u\to v}\frac{\th\frac{\rho_R(u,v)}{2}}{s_R(u,v)}
$$
and give its geometric interpretation.

Consider the segments $\Sigma_1$ and $\Sigma_2$ which are parts of the
bisectors of the angles at the  vertices $-k-i$ and $-k+i$ of $R$, respectively,
connecting these vertices with the
point $-(k-1)$ on the real axis;  similarly, let also $\Sigma_3$ and
$\Sigma_4$ join the vertices $k-i$ and $k+i$   with the point $k-1$ on the real axis.
Consider also the segment $\Sigma_0$ with endpoints $\pm(k-1)$.
These five segments subdivide $R$ into four subdomains, two trapezoids
and two triangles (see Fig. \ref{separate});  in the case
$k=1$ the trapezoids degenerate into triangles and $\Sigma_0$
degenerates into a point.

\begin{figure}[ht] \centering
\includegraphics[width=4. in,%
]{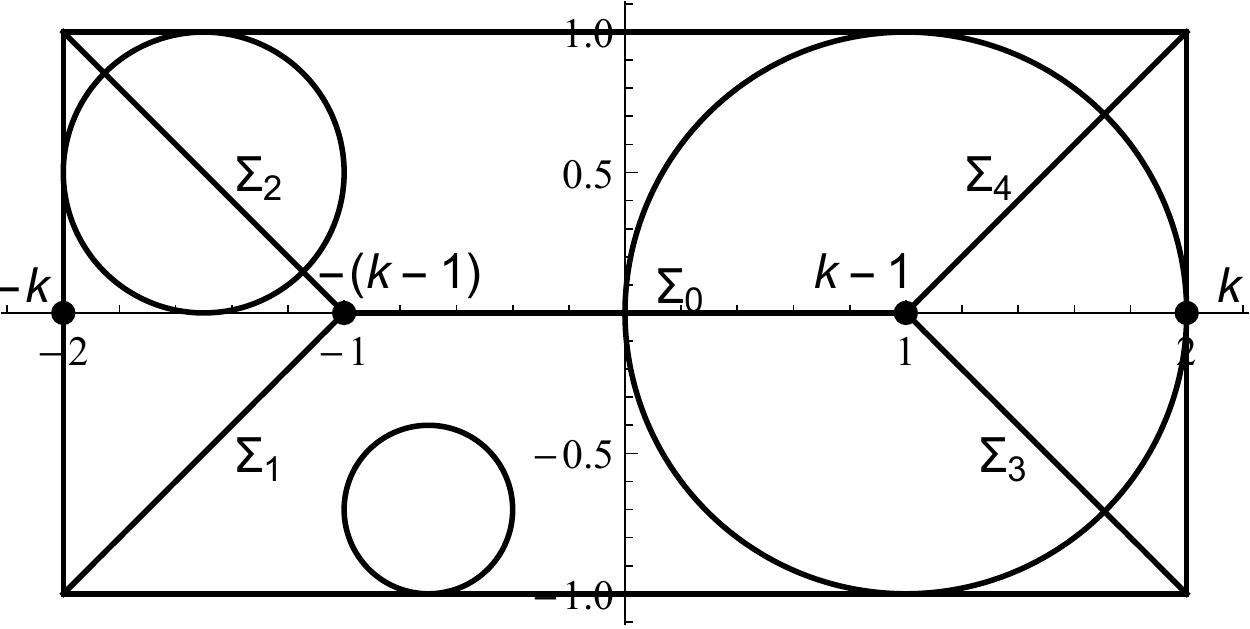}\
\caption{Decomposition of a rectangle $R$ and the five segments $\Sigma_j, 0\le j \le 4$, $k=2$.}
\label{separate}
\end{figure}

We see that if a point $v$ is in one of these subdomains, then the point
$\zeta_0$, where the minimal value of $|\zeta-v|$, $\zeta\in \partial R$,
is attained, lies on the side of $R$ which is common with this subdomain. If $v$
is on one of the segments described above and $v\neq\pm (k-1)$, then
the shortest distance is attained at two sides of $R$. At last, if
$v=\pm(k-1)$  then there are three sides where the minimal value of
$|\zeta-v|$, $\zeta\in \partial R$, is attained; in the case $k=1$, we have the square, the points $\pm(k-1)$ coincide with its center and the minimal distance is attained at all four the sides.

We can describe this situation as follows. If  $v$ is an interior point
of one subregion, then the largest disk contained in $R$ and centered at $v$
touches $\partial R$ at one point lying on the  of $R$ which is common with
this subdomain. If  $v\in \cup_{j=0}^4\Sigma_j$ and $v\neq\pm (k-1)$, then
the largest disk  touches two of the sides, and if
$v=\pm(k-1)$, then it touches $\partial R$ at three points. These
three case are shown on Fig. \ref{separate}. (If $k=1$, then the
largest disk touches all  the four sides of $R$.)

\begin{lem}\label{limrhos}
We have
\begin{equation}\label{rat}
\lim_{u\to v}\frac{\th\frac{\rho_R(u,v)}{2}}{s_R(u,v)}=\frac{2 d_R(v)}{r_R(v)},
\end{equation}
where $d_R(v)=\mbox{\rm dist}(v,\partial R)$ and $r_R(v)$ is the conformal radius of $R$ at the point $v$.
\end{lem}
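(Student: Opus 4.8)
The plan is to reduce \eqref{rat} to two elementary asymptotic relations as $u\to v$ — one for the numerator $\th\frac{\rho_R(u,v)}{2}$ and one for the denominator $s_R(u,v)$ — and then to divide, observing that a common factor $|u-v|$ cancels. I note first that nothing in the argument is special to rectangles: the same reasoning proves the identity with $R$ replaced by an arbitrary simply connected proper subdomain $G$ of $\mathbb{C}$, with $d_G(v)$ and $r_G(v)$ in place of $d_R(v)$ and $r_R(v)$. Throughout, $v\in R$ is fixed, and both $d_R(v)$ and $r_R(v)$ are finite and positive since $v$ is an interior point.

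For the numerator I would use a conformal map. Let $g\colon R\to\B$ be conformal with $g(v)=0$. By the conformal invariance of the hyperbolic metric and the formula $\th\frac{\rho_{\B}(x,y)}{2}=\left|\frac{x-y}{1-x\overline{y}}\right|$ recorded in Section~\ref{Sec-Preliminaries}, one gets $\th\frac{\rho_R(u,v)}{2}=|g(u)|$. A first-order Taylor expansion of the holomorphic function $g$ at $v$ (with $g(v)=0$ and $g'(v)\neq 0$) gives $|g(u)|=|g'(v)|\,|u-v|\,\bigl(1+o(1)\bigr)$ as $u\to v$. Applying \eqref{crad1} at the point $z_0=v$, where $|g(v)|=0$, yields $r_R(v)=1/|g'(v)|$, so that
\[
\th\frac{\rho_R(u,v)}{2}=\frac{|u-v|}{r_R(v)}\,\bigl(1+o(1)\bigr),\qquad u\to v.
\]

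For the denominator I would show that $\delta(u,v):=\inf_{z\in\partial R}\bigl(|u-z|+|z-v|\bigr)\to 2\,d_R(v)$. The upper bound $\delta(u,v)\le |u-v|+2\,d_R(v)$ follows by testing $z$ equal to a nearest boundary point of $v$; the lower bound $\delta(u,v)\ge 2\,d_R(v)-|u-v|$ follows from $|u-z|\ge |v-z|-|u-v|\ge d_R(v)-|u-v|$ together with $|z-v|\ge d_R(v)$ for every $z\in\partial R$. Hence $\delta(u,v)=2\,d_R(v)+O(|u-v|)$ and, by \eqref{s},
\[
s_R(u,v)=\frac{|u-v|}{\delta(u,v)}=\frac{|u-v|}{2\,d_R(v)}\,\bigl(1+o(1)\bigr),\qquad u\to v.
\]
Dividing the two displays, the factors $|u-v|$ cancel and the quotient tends to $2\,d_R(v)/r_R(v)$, which is \eqref{rat}.

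The main obstacle is, frankly, not a serious one: both ingredients are routine (a Taylor expansion and a triangle-inequality squeeze). The single step deserving care is the identity $|g'(v)|=1/r_R(v)$, that is, that the infinitesimal hyperbolic metric at $v$ is exactly $2/r_R(v)$. This is where the definition of the conformal radius genuinely enters, and it is also the real content of the lemma, since the $s$-metric contributes the factor $1/(2\,d_R(v))$ and the ratio of these two densities is precisely $2\,d_R(v)/r_R(v)$.
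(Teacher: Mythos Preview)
Your proof is correct and follows essentially the same strategy as the paper: expand $\th\tfrac{\rho_R(u,v)}{2}$ via a conformal map, expand $s_R(u,v)$ using the distance to the boundary, and divide. The paper maps to $\mathbb{H}^2$ and uses the explicit reflection of $v$ across the nearest side of the rectangle to evaluate $s_R$, which necessitates a case split according to which subregion of $R$ contains $v$; your normalization $g(v)=0$ with target $\mathbb{B}^2$ and the triangle-inequality squeeze for $\delta(u,v)$ avoid that case analysis and, as you observe, make the argument work verbatim for any simply connected proper subdomain of $\mathbb{C}$.
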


\begin{proof}
Let $f:R\to\uhp^2$ be a conformal mapping of $R$ onto the upper half plane $\uhp^2$. Then, by the definition,
$$
\th\frac{\rho_R(u,v)}{2}=\frac{|f(u)-f(v)|}{|f(u)-\overline{f(v)}|}.
$$
Let $v$ be a point from the lower trapezoidal part of $R$ (other three cases can be considered similarly).
Then
$$
s_R(u,v)=\frac{|f(u)-\overline{f(v)}|}{|u-\overline{v}+2i|}.
$$
Therefore,
$$
\frac{\th\frac{\rho_R(u,v)}{2}}{s_R(u,v)}=\frac{|f(u)-f(v)|}{|u-v|}
\frac{|u-\overline{v}+2i|}{|f(u)-\overline{f(v)}|}\to 2(\Im (v)+1)\frac{|f'(v)|}{2\Im f(v)}.
$$
Since
$$
\Im (v)+1=\mbox{\rm dist}(v,\partial R) \quad \text{and} \quad \frac{|f'(v)|}{2\Im f(v)}=r_R(v),
$$
we obtain \eqref{rat}.
\end{proof}
We see that if \eqref{C12} holds, then by Lemma \ref{limrhos} we get
$$
\frac{2}{C_2} d_R(v)\le r_R(v)\le \frac{2}{C_1}d_R(v).
$$
Therefore, we obtain an  estimation of the conformal radius $r_R(v)$ through $d_R(v)$. On the other hand, if for some point $v\in R$ we have
$$
r_R(v)=\widetilde{C}d_R(v),\quad 0<\widetilde{C}\le 1,
$$
then the constant $C_2$ in \eqref{C12} satisfies $C_2\ge 2/\widetilde{C}$.
From \eqref{rd4} we obtain
$$
\frac{1}{2}\le\frac{2 d_D(v)}{r_D(v)}\le 2, \quad v\in D
$$
and for convex domains, with the help of
\eqref{rd2}, we deduce that
$$
1\le\frac{2 d_D(v)}{r_D(v)}\le 2, \quad v\in D.
$$
Given $k\ge 1$, let $\lambda\in(0,1)$
be the unique root of the equation
\begin{equation} \label{lambda}
\frac{\K(\lambda)}{\K'(\lambda)}=\frac{k}{2},
\end{equation}
where $\K(\lambda)$ is as defined in \eqref{Kdef}.


\begin{lem}\label{segm}
Let $D$ be a convex planar domain with nonempty boundary $\partial D$ and
suppose that one of the following conditions is valid:

(i) The circle centered at a point $z_0\in D$ of radius $d_D(z_0)$ touches $\partial D$ at some point $\zeta_0$;

(ii)  The boundary $\partial D$ contains two linear segments $[a,\zeta_0]$ and $[\zeta_0,b]$
and the circle centered at a point $z_0\in D$ of radius $d_D(z_0)$ touches $\partial D$ at two points $\zeta_1\in [a,\zeta_0]$ and $\zeta_2\in [\zeta_0,b]$.

Then for every $z_1$ which is an interior point of the segment with endpoints $z_0$, $\zeta_0$ we have
\begin{equation}\label{dr}
\frac{d_D(z_1)}{r_D(z_1)}\le\frac{d_D(z_0)}{r_D(z_0)}.
\end{equation}
Moreover, if either (i) holds and $D$ is distinct from a half plane, or (ii) is valid and $\partial D$ is distinct from the union of two ray coming from the point $\zeta_0$ and passing through the points $a$ and $b$, then the inequality in \eqref{dr} is strict.
\end{lem}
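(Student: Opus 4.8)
The plan is to compare $D$, near the point $z_1$, with a suitably rescaled copy of itself sitting inside $D$, on which the comparison between distance and conformal radius is exact, and then to invoke the monotonicity of the conformal radius under inclusion. Concretely, write $z_1=(1-c)\zeta_0+cz_0$ with $c\in(0,1)$ (possible since $z_1$ is an interior point of $[z_0,\zeta_0]$) and set $\phi(z)=(1-c)\zeta_0+cz$, the homothety of ratio $c$ centred at $\zeta_0$, so that $\phi(z_0)=z_1$. Since $D$ is convex and $\zeta_0\in\overline D$, for every $z\in D$ the point $\phi(z)$ lies on the open segment joining the boundary point $\zeta_0$ to the interior point $z$, hence $\phi(z)\in D$; thus $D':=\phi(D)$ is a convex (so simply connected) subdomain of $D$ containing $z_1$. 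Because $\phi$ is a similarity with ratio $c$, formula \eqref{crad1} gives $r_{D'}(z_1)=r_{D'}(\phi(z_0))=c\,r_D(z_0)$, while the monotonicity of the conformal radius under inclusion (Schwarz lemma applied to $g_D\circ g_{D'}^{-1}$, where $g_D,g_{D'}$ map $D,D'$ onto the unit disk fixing $z_1$) yields $r_D(z_1)\ge r_{D'}(z_1)=c\,r_D(z_0)$.

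Next I would show that $d_D(z_1)\le c\,d_D(z_0)$. In case (i) one has $|z_0-\zeta_0|=d_D(z_0)$ and $\zeta_0\in\partial D$, so $d_D(z_1)\le|z_1-\zeta_0|=c|z_0-\zeta_0|=c\,d_D(z_0)$. In case (ii) the disk of radius $d_D(z_0)$ about $z_0$ lies in $\overline D$, which in turn lies in each of the closed half-planes bounded by the lines $\ell_a\supseteq[a,\zeta_0]$ and $\ell_b\supseteq[\zeta_0,b]$; hence $\zeta_1,\zeta_2$ are the feet of the perpendiculars from $z_0$ to $\ell_a,\ell_b$, the point $z_0$ is equidistant from these two lines, and $[z_0,\zeta_0]$ runs along the interior bisector of the angle of $D$ at $\zeta_0$. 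A direct computation shows the foot of the perpendicular from $z_1$ to $\ell_a$ equals $(1-c)\zeta_0+c\zeta_1\in[\zeta_0,\zeta_1]\subseteq[a,\zeta_0]\subseteq\partial D$, whence $d_D(z_1)\le c\,|z_0-\zeta_1|=c\,d_D(z_0)$. Combining the two estimates,
\[
\frac{d_D(z_1)}{r_D(z_1)}\ \le\ \frac{c\,d_D(z_0)}{c\,r_D(z_0)}\ =\ \frac{d_D(z_0)}{r_D(z_0)},
\]
which is \eqref{dr}.

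For the strict case it is enough to prove that the inclusion $D'\subseteq D$ is proper, since then the equality statement in the Schwarz lemma gives $r_{D'}(z_1)<r_D(z_1)$, and the displayed chain becomes strict. If instead $\phi(D)=D$, then also $\phi^{-1}(D)=D$, so for each $z\in D$ the iterates $\phi^{\pm n}(z)$ all lie in $D$; by convexity the segments joining consecutive iterates lie in $D$, and letting $n\to\pm\infty$ one sees that the whole open ray from $\zeta_0$ through $z$ lies in $D$. Thus $D$ is a convex cone with apex $\zeta_0$, i.e. either a half-plane with $\zeta_0$ on its boundary line or an angular sector with vertex $\zeta_0$. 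In case (i) the sector alternative is impossible, because for a sector with vertex $v$ and any interior point $z_0$ one has $d_D(z_0)<|z_0-v|$ (the opening angle being $<\pi$, the nearest boundary point lies on one of the two sides, not at $v$), so the circle of radius $d_D(z_0)$ cannot touch $\partial D$ at the vertex; the half-plane alternative is excluded by hypothesis. In case (ii) both alternatives force $\partial D$ to be exactly the union of the two rays from $\zeta_0$ through $a$ and $b$, which is excluded by hypothesis. Hence $D'\subsetneq D$ and \eqref{dr} is strict.

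The one genuinely nonroutine step is the very first one — hitting upon the homothetic comparison domain $\phi(D)\subseteq D$; after that the main inequality is immediate from the monotonicity and scaling of the conformal radius, the distance estimate is elementary planar geometry, and the only remaining subtlety is the classification of the equality case (i.e. deciding when $\phi(D)=D$), which the two extra hypotheses are precisely designed to rule out.
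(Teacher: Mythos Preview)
Your proof is correct and follows essentially the same idea as the paper's: compare $D$ with a homothetic copy centred at $\zeta_0$ and use the monotonicity of the conformal radius under inclusion. The only cosmetic difference is direction—the paper dilates $D$ by $\alpha=|z_0-\zeta_0|/|z_1-\zeta_0|>1$ (so $D\subset G$ and the comparison is made at $z_0$, where the paper simply asserts $d_G(z_0)=d_D(z_0)$), whereas you contract by $c=1/\alpha$ (so $D'\subset D$ and the comparison is at $z_1$); your version spells out the distance estimate and the equality-case analysis in more detail than the paper does.
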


\begin{proof}Since  $d_D$ and $r_D$ are invariant under the shifts of the plane, without loss of generality we can assume that  $\zeta_0=0$.
Let $\alpha=z_0/z_1$, $\alpha>1$. Denote $G=\psi(D)$ where
the mapping $\psi:z\mapsto \alpha z$.
We note that $\psi(z_1)=z_0$ and  $r_G(z_0)=\alpha r_D(z_1)$,
$d_G(z_0)=\alpha d_D(z_1)$, therefore,
$d_D(z_1)/r_D(z_1)=d_G(z_0)/r_G(z_0)$.
But because $D$ is convex, we obtain $D\subset G$, therefore,
$r_G(z_0)\ge r_D(z_0)$. Since evidently  $d_G(z_0)= d_D(z_0)$,
we obtain \eqref{dr}.
If either (i) holds and $D$ is distinct from a half plane, or (ii) is valid and $\partial D$ is distinct from the union of two ray coming from the point $\zeta_0$ and passing through the points $a$ and $b$, then $D\neq G$ and,
therefore,   $r_G(z_0)< r_D(z_0)$, so the inequality in \eqref{dr} is strict.
\end{proof}

\begin{thm}\label{infinites}
If $R$ is the  rectangle $[-k,k]\times[-1,1]$, $k\ge 1$, then for all $v \in R$
\begin{equation}\label{2dr}
1< \frac{2 d_R(v)}{r_R(v)}\le C(\lambda),
\end{equation}
where
\begin{equation}\label{Cl}
C(\lambda)=
\K(\lambda)\left|\frac{\cn(i\K(\lambda),\lambda)\dn(i\K(\lambda),
\lambda)}{\sn(i\K(\lambda),\lambda)}\right|.
\end{equation}
Both inequalities of \eqref{2dr} are sharp.
Moreover, equality in the second inequality holds if and only if $v=\pm (k-1)$.
\end{thm}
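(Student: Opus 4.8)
The lower bound is immediate. The rectangle $R$ is convex and is not a half-plane, so by the sharp inequality \eqref{rd2} one has $r_R(v)<2\,d_R(v)$ for every $v\in R$, i.e. $2d_R(v)/r_R(v)>1$; this is sharp, the value $1$ being approached as $v$ tends to the midpoint of a side of $R$. For the upper bound the plan is to prove that $\Phi(v):=2d_R(v)/r_R(v)$ attains its maximum over $R$ exactly at the two points $v=\pm(k-1)$, and then to evaluate $\Phi$ there. Throughout I use the partition of $R$ by the segments $\Sigma_0,\dots,\Sigma_4$ described before the statement: if $v$ lies in the interior of one of the four subregions the largest inscribed disk centred at $v$ touches $\partial R$ at one point $\zeta_0$, lying on the side common with that subregion; if $v$ lies on an open $\Sigma_j$ with $1\le j\le4$ it touches the two sides of $R$ meeting at the corresponding vertex of $R$; if $v$ lies on the open $\Sigma_0$ (that is, $v$ real with $|v|<k-1$) it touches the two parallel sides $\{\Im z=\pm1\}$; and if $v=\pm(k-1)$ it touches three sides.

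First I push every point toward $\Sigma_0\cup\{\pm(k-1)\}$. If $v$ is interior to a subregion, let $v'$ be the first point of $\bigcup_{j=0}^{4}\Sigma_j$ met by the ray issuing from $v$ along the line through $v$ and $\zeta_0$, in the direction away from $\zeta_0$; for every interior point $z_0$ of $[v,v']$ the inscribed disk centred at $z_0$ still touches $\partial R$ only at $\zeta_0$, so Lemma~\ref{segm}(i) gives $\Phi(v)\le\Phi(z_0)$, strictly since $R$ is not a half-plane, while the same lemma gives $\Phi(z_0)\le\Phi(v')$; hence $\Phi(v)<\Phi(v')$ with $v'\in\bigcup_j\Sigma_j$. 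If in turn $v'$ lies on an open $\Sigma_j$ with $1\le j\le4$, the two contact points lie on the two segments of $\partial R$ issuing from the vertex of $R$ at one end of $\Sigma_j$, so Lemma~\ref{segm}(ii) applies along $\Sigma_j$; moving away from that vertex strictly increases $\Phi$ (here $\partial R$ is not a union of two rays), and the same limiting argument yields $\Phi(v')<\Phi(\pm(k-1))$, where $\pm(k-1)$ is the other endpoint of $\Sigma_j$. Thus $\Phi(v)<\Phi(\pm(k-1))$ for every $v\in R$ not lying on $\Sigma_0$.

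The case $v\in\Sigma_0$ is the one place where Lemma~\ref{segm}(ii) is unavailable---the two nearest sides being parallel---and is the crux. I treat it by computing $\Phi$ along the real axis explicitly. Let $G:R\to\mathbb{H}^2$ be the conformal map $G(z)=\sn\big(\tfrac{\K'(\lambda)}{2}(z+i),\lambda\big)$, which by \eqref{lambda} carries $R$ onto $\mathbb{H}^2$; by \eqref{crad2}, $r_R(x)=2\Im G(x)/|G'(x)|$ for real $x\in(-k,k)$. Put $t=\tfrac{\K'(\lambda)}{2}x$ and write $s=\sn t$, $c=\cn t$, $d=\dn t$, all with modulus $\lambda$. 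Using the addition formulas for the Jacobi functions at the half‑period $\tfrac{i\K'(\lambda)}{2}$---at which $\sn=i/\sqrt{\lambda}$, $\cn=\sqrt{(1+\lambda)/\lambda}$, $\dn=\sqrt{1+\lambda}$---one obtains after simplification
\begin{equation*}
r_R(x)=\frac{4\,c\,d}{(1+\lambda)\,\K'(\lambda)\,(1-\lambda s^{2})},
\end{equation*}
all Jacobi functions having modulus $\lambda$. Since $d_R(x)=1$ whenever $|x|\le k-1$, this gives there $\Phi(x)=\tfrac{1}{2}(1+\lambda)\K'(\lambda)\,(1-\lambda s^{2})/(cd)$, and a direct differentiation shows $\tfrac{d}{dx}\Phi(x)$ to have the sign of $s\,(1-\lambda)^{2}(1+\lambda s^{2})$, which is positive for $0<t<\K(\lambda)$. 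Hence $\Phi$ is strictly increasing on $[0,k-1]$ and, by the symmetry $v\mapsto-v$ of $R$, strictly decreasing on $[-(k-1),0]$, so $\Phi(v)<\Phi(\pm(k-1))$ for $v\in\Sigma_0$, $v\ne\pm(k-1)$.

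Combining the three cases, $\Phi$ attains its maximum on $R$ precisely at $v=\pm(k-1)$, which gives the upper bound in \eqref{2dr} with $C(\lambda)=\Phi(\pm(k-1))$ together with the equality statement; the explicit value is obtained by substituting $x=k-1$, i.e. $t=\K(\lambda)-\tfrac{\K'(\lambda)}{2}$, into the formula for $\Phi$ and simplifying with the standard half‑ and quarter‑period identities for the Jacobi functions to bring it into the form \eqref{Cl}. Together with the lower bound this proves \eqref{2dr}, and the sharpness of both inequalities has been noted. I expect the analysis on $\Sigma_0$ to be the real obstacle: Lemma~\ref{segm} disposes of everything else almost mechanically, but establishing monotonicity of $\Phi$ along the axis genuinely requires the closed form of $r_R$ and the (elementary but not transparent) factorisation of the derivative.
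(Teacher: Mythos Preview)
Your proof is correct and follows the same overall strategy as the paper: use Lemma~\ref{segm}(i) to push the maximum of $\Phi$ onto $\bigcup_j\Sigma_j$, then Lemma~\ref{segm}(ii) to push it to $\Sigma_0$, then show the maximum on $\Sigma_0$ sits at the endpoints $\pm(k-1)$, and finally evaluate there. The one substantive difference is in the third step: the paper simply asserts that $r_R$ is even on $[-k,k]$ and strictly decreasing on $[0,k]$ (so its minimum on $\Sigma_0$ is at $\pm(k-1)$), whereas you compute $r_R$ explicitly along the real axis via the map $G(z)=\sn\bigl(\tfrac{\K'(\lambda)}{2}(z+i),\lambda\bigr)$, obtain the closed form $r_R(x)=4cd/\bigl((1+\lambda)\K'(\lambda)(1-\lambda s^2)\bigr)$, and differentiate $\Phi$ directly---your factorisation of the derivative as $s(1-\lambda)^2(1+\lambda s^2)/(c^2d^2)$ up to a positive constant is correct. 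This makes your argument more self-contained at exactly the point you flagged as the crux, while the paper's version is shorter but leaves the monotonicity of $r_R$ unproved.
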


\begin{proof}
The lower estimation in \eqref{2dr} is evident. We will prove the upper one.
First we will show that the maximal value of $2 d_R(v)/r_R(v)$, $v\in R$,
is attained only on the segment with endpoints $\pm (v-1)$.
Using the statement  (i) of Lemma~\ref{segm}, we see that the
maximum of the value $2 d_R(v)/r_R(v)$, $v\in R$, is attained if
$v\in \cup_{j=0}^4\Sigma_j$. By the statement  (ii)
of Lemma~\ref{segm} we conclude that $v$ is on the segment
$\Sigma_0$ with endpoints $\pm(k-1)$.

For  $v \in \Sigma_0$ we have $d_{R}(v)= 1$.
Therefore, to find the maximal value of $2 d_R(v)/r_R(v)$ we
only need to find the minimal value of the conformal radius $r_R(v)$,
$v\in \Sigma_0$.
But $r_R(v)$ is an even function on $[-k,k]$ which is strictly decreasing on $[0,k]$. Therefore, the minimal value of $r_R(v)$ on $\Sigma_0=[-(k-1),k-1]$ is attained at the endpoints of the segment, i.e. at the points $\pm (k-1)$.

Now we will find the value $r_R(k-1)$. To calculate it we replace,
for convenience,
 $R$ with its image $R_2=[-\alpha,\alpha]\times [0,2\alpha k]$ under
the mapping $z\mapsto i\alpha(k-z)$ where the constant $\alpha>0$ will be
fixed below; we note that, under the mapping, the the point $k-1$ goes to
$i\alpha$. This mapping does not change the value
of the conformal radius.
It is known that $R_2$ can be mapped onto the upper half-plane
$\uhp^2$ such that its vertices $\alpha(-1+2ik)$, $-\alpha$,
$\alpha$, and   $\alpha(1+2ik)$ will go to $-1/\lambda$, $-1$, $1$, $1/\lambda$;
here $\lambda\in(0,1)$ is the unique root of the equation \eqref{lambda}.
If we put $\alpha=\K(\lambda)$, then we can use as such a mapping
the Jacobi elliptic function $f(z)=\sn(z,\lambda)$ which maps $R_2$ onto
$\uhp^2$ with the correspondence of boundary points indicated above \cite[p.358]{af}.
By the definition of the conformal radius, we have
$$
r_{R_2}(i\K(\lambda))=
\frac{2\Im f(i\K(\lambda))}{|f'(i\K(\lambda))|}.
$$
Since $f'(z)=\cn(z,\lambda)\dn(z,\lambda)$ and $f(i\K(\lambda))$ is a pure imaginary number, we find
$$
r_R(k-1)=r_{R_2}(i\K(\lambda))=\frac{2\sn(i\K(\lambda),\lambda)|}{|\cn(i\K(\lambda),\lambda)
\dn(i\K(\lambda),\lambda)|}.
$$
We also have $d_{R_2}(i\K(\lambda))=\K(\lambda)$. Since the ratio of the distance to the boundary and the conformal radius is invariant under linear conformal mappings, we have
$$
\frac{2d_R(k-1)}{r_R(k-1)}=\K(\lambda)\left|\frac{\cn(i\K(\lambda),\lambda
)\dn(i\K(\lambda),\lambda)}{\sn(i\K(\lambda),\lambda)}\right|.
$$
As we showed above, this is the biggest value of $2d_R(v)/r_R(v)$ over $R$.
\end{proof}
Since $\sn(i\K(\lambda),\lambda)$ is a pure imaginary complex number,
$\cn(z,\lambda)=\sqrt{1-\sn^2(z,\lambda)}$ and
$\cn(z,\lambda)=\sqrt{1-\lambda^2\sn^2(z,\lambda)}$,
we can write \eqref{Cl} in the form
$$
C(\lambda)=\K(\lambda)\frac{\sqrt{(1+a^2(\lambda))(1+\lambda^2a^2(\lambda))}}
{a(\lambda)},\quad {\rm where}\quad  a(\lambda):=|\sn(i\K(\lambda),\lambda)|.
$$
\begin{cor} \label{sharpConst} We have
$$
\frac{2}{C(\lambda)}d_R(v)\le{r_R(v)}< 2 d_R(v),\quad v
\in R
$$
and the inequalities are sharp.
\end{cor}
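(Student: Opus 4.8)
The plan is to derive Corollary~\ref{sharpConst} as an immediate restatement of Theorem~\ref{infinites}. The corollary asserts, for all $v\in R$, the chain $\frac{2}{C(\lambda)}d_R(v)\le r_R(v)< 2 d_R(v)$, and since $r_R(v)>0$ and $d_R(v)>0$ everywhere in $R$, this is equivalent to the inequality $1<\frac{2d_R(v)}{r_R(v)}\le C(\lambda)$ established in \eqref{2dr}. So the first step is simply to observe that both chains are obtained from one another by multiplying through by $r_R(v)/(d_R(v)\,C(\lambda))$ (respectively by $d_R(v)$) and rearranging, using that these quantities are strictly positive.

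The second step is to transfer the sharpness claims. Theorem~\ref{infinites} states that both inequalities in \eqref{2dr} are sharp and, moreover, that equality in the upper bound $\frac{2d_R(v)}{r_R(v)}\le C(\lambda)$ holds precisely when $v=\pm(k-1)$. Under the reciprocal-and-multiply correspondence above, equality $\frac{2d_R(v)}{r_R(v)}=C(\lambda)$ becomes equality $r_R(v)=\frac{2}{C(\lambda)}d_R(v)$, so the left inequality of the corollary is attained exactly at $v=\pm(k-1)$; in particular it is sharp. For the right inequality $r_R(v)<2d_R(v)$: the lower bound $1<\frac{2d_R(v)}{r_R(v)}$ in the theorem is strict (as noted, "evident", following from \eqref{rd2} since a rectangle is convex but not a half-plane), which translates exactly to $r_R(v)<2d_R(v)$ with the strict sign. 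Its sharpness means the constant $2$ cannot be improved, which follows because $C(\lambda)\to 2$ as $k\to\infty$ (the rectangle degenerates toward a half-strip and the local behavior near the "center line" approaches that of a half-plane), or more simply because \eqref{rd2} is already sharp for convex domains via half-planes and the bound cannot be lowered uniformly over the family of rectangles.

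I expect there is essentially no obstacle here: the corollary is a cosmetic reformulation of the theorem, and the only thing to be careful about is stating precisely \emph{in what sense} each inequality is sharp — the left one is attained (at $v=\pm(k-1)$, for the given $k$), whereas the right one is strict for every fixed $R$ but has $2$ as its best possible constant in the limit $k\to\infty$. A clean write-up would just say: "This is a direct consequence of Theorem~\ref{infinites}: dividing \eqref{2dr} by $\frac{2d_R(v)}{C(\lambda)r_R(v)}>0$ and by $\frac{d_R(v)}{r_R(v)}>0$ respectively gives the asserted two-sided bound, and the sharpness statements correspond under this manipulation to those of \eqref{2dr}, with equality in the left-hand inequality exactly at $v=\pm(k-1)$."
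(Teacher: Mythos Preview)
Your main observation is correct and matches the paper exactly: the corollary is stated without proof because it is nothing more than the algebraic rearrangement of \eqref{2dr}, using $d_R(v),r_R(v)>0$. The sharpness claims likewise transfer directly, and in particular your identification of equality on the left at $v=\pm(k-1)$ is right.

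There is, however, a genuine error in your discussion of sharpness of the right inequality. You assert that $C(\lambda)\to 2$ as $k\to\infty$; this is false. The paper computes explicitly (just below the corollary) that
\[
\lim_{\lambda\to 1-}\widetilde{C}(\lambda)=\frac{\pi}{2}\coth\frac{\pi}{2}=1.7126\ldots,
\]
which corresponds to the half-strip limit, not to a half-plane. So the constant $C(\lambda)$ stays bounded away from $2$ for all rectangles, and your proposed mechanism for sharpness does not work. More to the point, varying $k$ is the wrong framing altogether: the corollary is a statement about a \emph{fixed} rectangle $R$, and ``sharp'' for the strict inequality $r_R(v)<2d_R(v)$ means $\sup_{v\in R} r_R(v)/d_R(v)=2$ within that fixed $R$.

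The correct (and shorter) argument is simply to invoke the theorem again: Theorem~\ref{infinites} already asserts that the lower bound $1<\frac{2d_R(v)}{r_R(v)}$ is sharp, i.e.\ $\inf_{v\in R}\frac{2d_R(v)}{r_R(v)}=1$. Taking reciprocals, this is exactly the statement that $\sup_{v\in R}\frac{r_R(v)}{2d_R(v)}=1$, which is the sharpness of $r_R(v)<2d_R(v)$. Concretely this is realised by letting $v$ approach an interior point of a side of $R$, where the domain looks locally like a half-plane and $r_R(v)/d_R(v)\to 2$. No limit in $k$ is needed.
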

\begin{conjecture}\label{conclambda}
Let $C(\lambda)$ be defined as in \eqref{Cl}. Then for all points $u$, $v$ lying in the rectangle $R=[-k,k]\times[-1,1]$, $k\ge 1$, we have
\begin{equation}\label{thrl}
{s_R(u,v)}\le{\th\frac{\rho_R(u,v)}{2}}  \le C(\lambda){s_R(u,v)}.
\end{equation}
These inequalities are sharp.
\end{conjecture}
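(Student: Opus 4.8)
The plan is to prove the two inequalities in \eqref{thrl} separately. The left one, $s_R(u,v)\le\th(\rho_R(u,v)/2)$, is the easy direction and in fact holds for every convex domain. To get it, fix $u,v\in R$ and let $z^{*}\in\partial R$ realize $\inf_{z\in\partial R}(|u-z|+|z-v|)$; then $z^{*}$ lies on some side $L$ of $R$, and since $R$ is convex there is an open half-plane $H$ with $R\subset H$ and $L\subset\partial H$. Because $L\subset\partial H$ and the infimum over $\partial R$ is attained at $z^{*}\in L$, we have $\inf_{z\in\partial H}(|u-z|+|z-v|)\le\inf_{z\in\partial R}(|u-z|+|z-v|)$, i.e. $s_R(u,v)\le s_H(u,v)$; moreover $R\subset H$ gives $\rho_H(u,v)\le\rho_R(u,v)$ by domain monotonicity of the hyperbolic metric, while $s_H(u,v)=\th(\rho_H(u,v)/2)$ since $H$ is a half-plane. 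Hence $s_R(u,v)\le s_H(u,v)=\th(\rho_H(u,v)/2)\le\th(\rho_R(u,v)/2)$.

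For the right inequality, write $\Psi(u,v)=\th(\rho_R(u,v)/2)/s_R(u,v)$; the goal is $\sup\Psi=C(\lambda)$, and the limiting part of this (equality as $u\to v$ with $v=\pm(k-1)$) is precisely Lemma \ref{limrhos} combined with Theorem \ref{infinites}. I would first use the $\Z_2\times\Z_2$ symmetry of $R$ to reduce to the generic case in which the side realizing the $s$-infimum for the pair $(u,v)$ is the bottom side $L_0=[-k,k]\times\{-1\}$. If $f\colon R\to\uhp^2$ is a Riemann map, then $f$ sends $L_0$ to a real interval, so Schwarz reflection extends $f$ conformally to $R\cup L_0\cup\sigma(R)$ with $f(\sigma z)=\overline{f(z)}$, where $\sigma z=\overline z-2i$ is reflection in the line of $L_0$. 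Consequently $\th(\rho_R(u,v)/2)=|f(u)-f(v)|/|f(u)-\overline{f(v)}|=|F_v(u)|$, where $F_v(w)=(f(w)-f(v))/(f(w)-f(\sigma v))$ maps $R$ conformally onto $\B$ with $F_v(v)=0$ and, by \eqref{crad1}, $|F_v'(v)|=1/r_R(v)$; and in this configuration $s_R(u,v)=|u-v|/|u-\sigma v|$, so
\[
\Psi(u,v)=|F_v(u)|\,\frac{|u-\sigma v|}{|u-v|}\longrightarrow \frac{2d_R(v)}{r_R(v)}\quad(u\to v),
\]
recovering Lemma \ref{limrhos}. I would also record that $\Psi\equiv 1$ on $\partial R\times R$ and on $R\times\partial R$: if $u\in\partial R$ then $z=u$ is admissible in the infimum defining $s_R(u,v)$, which forces $s_R(u,v)=1=\th(\rho_R(u,v)/2)$. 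Thus $\Psi$ extends continuously to $\overline R\times\overline R$, equals $1$ on that part of the boundary, and exceeds $1$ on the diagonal, so its supremum is attained at an interior pair; the conjecture asserts that this supremum is $C(\lambda)=\max_{w\in R}2d_R(w)/r_R(w)$, the maximum being at $w=\pm(k-1)$ by Theorem \ref{infinites}.

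It remains to carry out the extremal analysis, which is where I expect the real difficulty. Fixing $u$, as $v$ ranges over $R$ the realizing side moves over the four sides $L_0,\dots,L_3$, partitioning $R$ into the four regions $\Omega_j(u)$ of Figure \ref{near}; on $\Omega_j(u)$ one has $\Psi(u,v)=|g_j(v)|$ with $g_j(v)=\dfrac{f(u)-f(v)}{f(u)-f(\sigma_j v)}\cdot\dfrac{u-\sigma_j v}{u-v}$, where $\sigma_j$ is reflection in the line of $L_j$ and $f$ is Schwarz-extended across $L_j$; each $g_j$ is holomorphic in $v$ on $R$, since the zero of the first factor at $v=u$ cancels the pole of the second. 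By the maximum principle the maximum of $|g_j|$ over $\overline{\Omega_j(u)}$ is attained on $\partial\Omega_j(u)$, which is a side of $R$ (where $\Psi=1$) together with two line segments along which $\Omega_j(u)$ meets its neighbours; on these ``tie segments'' the values coming from the two adjacent regions agree. Hence $\sup_v\Psi(u,v)$ is either $1$ or is attained on the union of the tie loci, and one is reduced to bounding $\Psi$ there. I expect this to be the main obstacle: there is no maximum principle along a one-dimensional tie locus, and $f$ is a transcendental elliptic function, so a genuinely quantitative argument is needed — a combined study of the geometry of the tie loci and of $f$ (through \eqref{lambda} and the explicit form \eqref{Cl} of $C(\lambda)$), most plausibly a two-variable monotonicity argument in the spirit of Lemma \ref{segm} and Theorem \ref{infinites} that pushes the extremal pair onto the diagonal and then to $v=\pm(k-1)$. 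As partial progress one could first settle the cases in which $u$ or $v$ lies on a symmetry axis of $R$, or in which one of the two points equals $\pm(k-1)$, by specializing this scheme; this would already support \eqref{thrl}.
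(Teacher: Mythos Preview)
The statement you are addressing is labeled a \emph{Conjecture} in the paper: the right-hand inequality in \eqref{thrl} is not proved there. All the paper provides is Remark~\ref{rem1}, which records that the left inequality $s_R(u,v)\le\th(\rho_R(u,v)/2)$ and its sharpness are evident. Your argument for that direction---comparing $R$ with a supporting half-plane $H\supset R$ whose boundary contains the side on which the $s_R$-infimum is attained, and using $s_H=\th(\rho_H/2)$ together with the monotonicity $\rho_H\le\rho_R$---is correct and is exactly the standard reasoning the paper is alluding to.

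For the right inequality you present an outline rather than a proof and explicitly flag the remaining step as the main obstacle; this matches the status in the paper, where the inequality is left as a conjecture. One point in your outline is wrong as written: the function
\[
g_j(v)=\frac{f(u)-f(v)}{f(u)-f(\sigma_j v)}\cdot\frac{u-\sigma_j v}{u-v}
\]
is \emph{not} holomorphic in $v$, because $f(\sigma_j v)=\overline{f(v)}$ and $\sigma_j v$ are anti-holomorphic in $v$. Your reduction to the tie segments can nevertheless be salvaged: $g_j$ factors as the product of a holomorphic function of $v$ (namely $(f(u)-f(v))/(u-v)$) and an anti-holomorphic one, both zero-free on $R$, so $\log|g_j|$ is harmonic on $R$ and $|g_j|=\Psi(u,\cdot)$ still obeys the maximum principle on each $\overline{\Omega_j(u)}$. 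Beyond this reduction neither you nor the paper closes the argument; establishing $\Psi(u,v)\le C(\lambda)$ for all pairs $u,v\in R$ remains the open content of Conjecture~\ref{conclambda}.
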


\begin{rem}\label{rem1}
The inequality $s_R(u,v)\le \th(\rho_R(u,v)/2)$ and its sharpness is evident. \medskip
\end{rem}

Above we considered rectangles $R=[-k,k]\times [-1,1]$ with $k\ge 1$.
This corresponds to the values
$\lambda\ge \lambda_0:=3-2\sqrt{2}=0.171572875\ldots$, where $\lambda_0$ is the unique root of the equation \eqref{lambda} for $k=1$ \cite[p.81, Table 5.1]{avv}.
We can also consider a rectangle with $k<1$. It is evident that this
case is easily reduced to the case $k\ge 1$ because we can apply
the mapping $z\mapsto (i/k)z$ which maps $R$ onto the rectangle
$[-1/k,1/k]\times [-1,1]$. For such a rectangle
we replace $\lambda$ with
 $h(\lambda)=(1-\sqrt{\lambda})^2/(1+\sqrt{\lambda})^2$.
Therefore, let us define $\widetilde{C}(\lambda)$ as follows:
 \begin{equation*}
   \widetilde{C}(\lambda):=\left\{
      \begin{array}{ll}
        C(h(\lambda)), & \hbox{$0<\lambda<\lambda_0$;} \\[2mm]
        C(\lambda), & \hbox{$\lambda_0\le \lambda<1$.}
      \end{array}
    \right.
 \end{equation*}
Then
$${r_R(v)}\le 2d_R(v)  \le \widetilde{C}(\lambda){s_R(u,v)}.$$
The graph of $\widetilde{C}(\lambda)$ for $0<\lambda<1$ is shown in Fig. \ref{grphiC}.

\begin{figure}[ht] \centering
\includegraphics[width=3. in,%
]{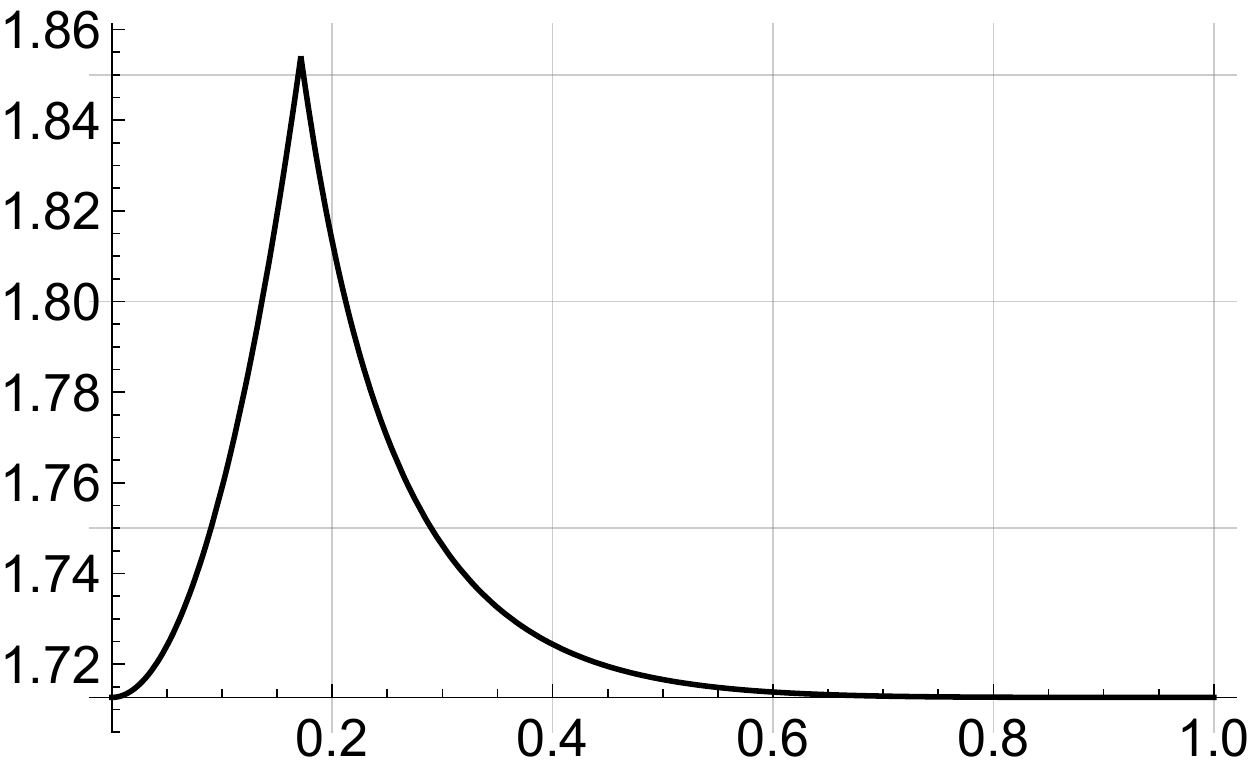}\
\caption{The graph of the function $\widetilde{C}(\lambda)$.}\label{grphiC}
\end{figure}

We have
\begin{equation}\label{l0}
\widetilde{C}(\lambda_0)=\varphi(\lambda_0):=\frac{(1+\lambda_0)\K'(\lambda_0)}{2}    =1.854074677\ldots,
\end{equation}
$$
\lim_{\lambda\to 0+}\widetilde{C}(\lambda)=\lim_{\lambda\to 1-}\widetilde{C}(\lambda)=\frac{\pi}{2}\coth\frac{\pi}{2}=1.7126885749\ldots.
$$
The first limiting case corresponds to a square and the second
one matches to the case of a half-strip.


\section{ Conformal radius of specific domains and its maximum}\label{confradmax}

In this section, we determine the maximal value of the conformal radius
for some domains which are conformal images of the unit disk $\mathbb{B}^2$.
  These domains can be found in \cite{kt2021} and its references.
We note that exact values of the maximal value for conformal radius of some special domains can be found in \cite[Tables of some functionals]{ps}.

\begin{example}\label{ex1}\rm
  Consider the univalent function $w=\phi_1(z)=\sqrt{1+z}-1$, $z\in {\mathbb{B}^2}$, where the branch of the square root is chosen such that $\sqrt{1}=1$. The function $\phi_1$ maps the open unit disk ${\mathbb{B}^2}$ onto the domain $D_1$ which is the right-half of the lemniscate of Bernoulli. Hence, the inverse mapping $z=\varphi_1(w)=w(w+2)$ maps $D_1$ onto ${\mathbb{B}^2}$.
  Since $\varphi_1(0)=0$ and $\varphi_1'(0)=2>0$, we have $r_{\phi_1({\mathbb{B}^2})}(0)=0.5$.

Now we will find the maximum of the conformal radius of $D_1$. If $w=\phi_1(z)$, where $z=t e^{i\theta}$, $t\in[0,1)$ and $\theta\in[0,2\pi)$, then by  \eqref{crad1} the conformal radius of $D_1$ at the point $w$ is equal to
$$
r_{D_1}(w)=\frac{1}{2}|1+z|^{-1/2}(1-|z|^2)\le\frac{1}{2}(1-t)^{-1/2}(1-t^2)
=\frac{1}{2}(1-t)^{1/2}(1+t)
$$
and the equality holds if and only if $\theta=\pi$. The maximal value of the function $(1-t)^{1/2}(1+t)/2$  is attained at $t_0=1/3$ and is equal to $2\sqrt{6}/9$.
Thus we see that the maximum of $r_{D_1}(w)$ in ${\mathbb{B}^2}$ is equal to $2\sqrt{6}/9=0.544331\ldots$ which is attained at the point $w=\phi_1(-1/3)=\sqrt{6}/3-1=-0.183503\ldots$.
\end{example}

\begin{example}\label{ex2}\rm
  Consider the univalent function $w=\phi_2(z)=z+\sqrt{1+z^2}-1$, $z\in {\mathbb{B}^2}$, where the branch of the square root is chosen such that $\sqrt{1}=1$. We note that this function is closely connected with the function inverse to the Joukowsky transform.  It maps the unit disk ${\mathbb{B}^2}$ onto the crescent-shaped region $D_2$, see Fig. \ref{fig:subfig1}.
  The inverse function $z=\varphi_2(w)=(w+1-(1/(1+w)))/2$ maps $D_2$ onto ${\mathbb{B}^2}$. Hence, $\varphi_2(0)=0$, $\varphi_2'(0)=1$,  therefore $r_{D_2}(0)=1$.

  Now we will find the maximal value of the conformal radius $r_{D_2}(w)$ for points $w$ lying on the real axis. Let $w=\phi_2(t)$, $t\in(-1,1)$, then, with the help of \eqref{crad1}, we obtain
  $$
  r_{D_2}(w)=\left|1+\frac{t}{\sqrt{1+t^2}}\right|(1-|z|^2)\le \left(1+\frac{|t|}{\sqrt{1+t^2}}\right)(1-t^2).
  $$
  Simple analysis shows that the maximal value of the expression in the right-hand side equals $M=1.17117\ldots$, it is attained at the point $t_0=0.319968\ldots$ which is the minimal positive root  of the equation $4t^8+12 t^6+t^4-10t^2+1=0$. The corresponding point of maximum of the conformal radius is equal to $w=\phi_2(t_0)=0.369911\ldots$. Therefore, we found that $\max_{-1<w<1} r_{D_2}(w)=M$. Finding the maximum of $r_{D_2}(w)$ over all $w\in D_2$ is an open problem.
\end{example}

\begin{example}\rm
  Consider the analytic function $w=\phi_3(z)=\alpha z+\beta z^2$, $z\in {\mathbb{B}^2}$, $\alpha,\beta\in\mathbb{C}$ and $|\alpha|\ge 2|\beta|>0$. This function maps the unit disk ${\mathbb{B}^2}$ onto the domain $D_3$ bounded by a heart-shaped curve, see Fig. \ref{fig:subfig2} for $\alpha=4/3$ and $\beta=2/3$. The maximal value of the conformal radius $r_{D_3}(w)$ equals $\ell(r_1)$, where
 \begin{equation*}
  \ell(r):=|\alpha|+2|\beta|r-|\alpha|r^2-2|\beta|r^3
 \end{equation*}
 and
  \begin{equation*}
    r_1=\frac{-|\alpha|+\sqrt{|\alpha|^2+12|\beta|^2}}{6|\beta|};
  \end{equation*}
the maximal value is attained at the point $w_0=\phi_3(r_1e^{i\delta})$ where $\delta=\arg(\alpha/\beta)$. For example, if we let $\alpha=\sqrt{2}$ and $\beta=1/2$, then the function $w=\phi_4(z)=\sqrt{2}z+z^2/2$, $z\in {\mathbb{B}^2}$ maps the unit disk ${\mathbb{B}^2}$ onto the domain $D_4$ bounded by a lima\c{c}on, see Fig. \ref{fig:subfig3}.
  The maximal value of the conformal radius $r_{D_4}(w)$ equals  $(2/27) (7\sqrt{2} + 5\sqrt{5})=1.56147\ldots$, it is attained at the point $w_0=\phi_4((\sqrt{5}-\sqrt{2})/3)=(4\sqrt{10}-5)/18=0.424951\ldots$.
\end{example}



\begin{figure}[!ht]
\centering
\subfigure[]{
\includegraphics[width=4cm]{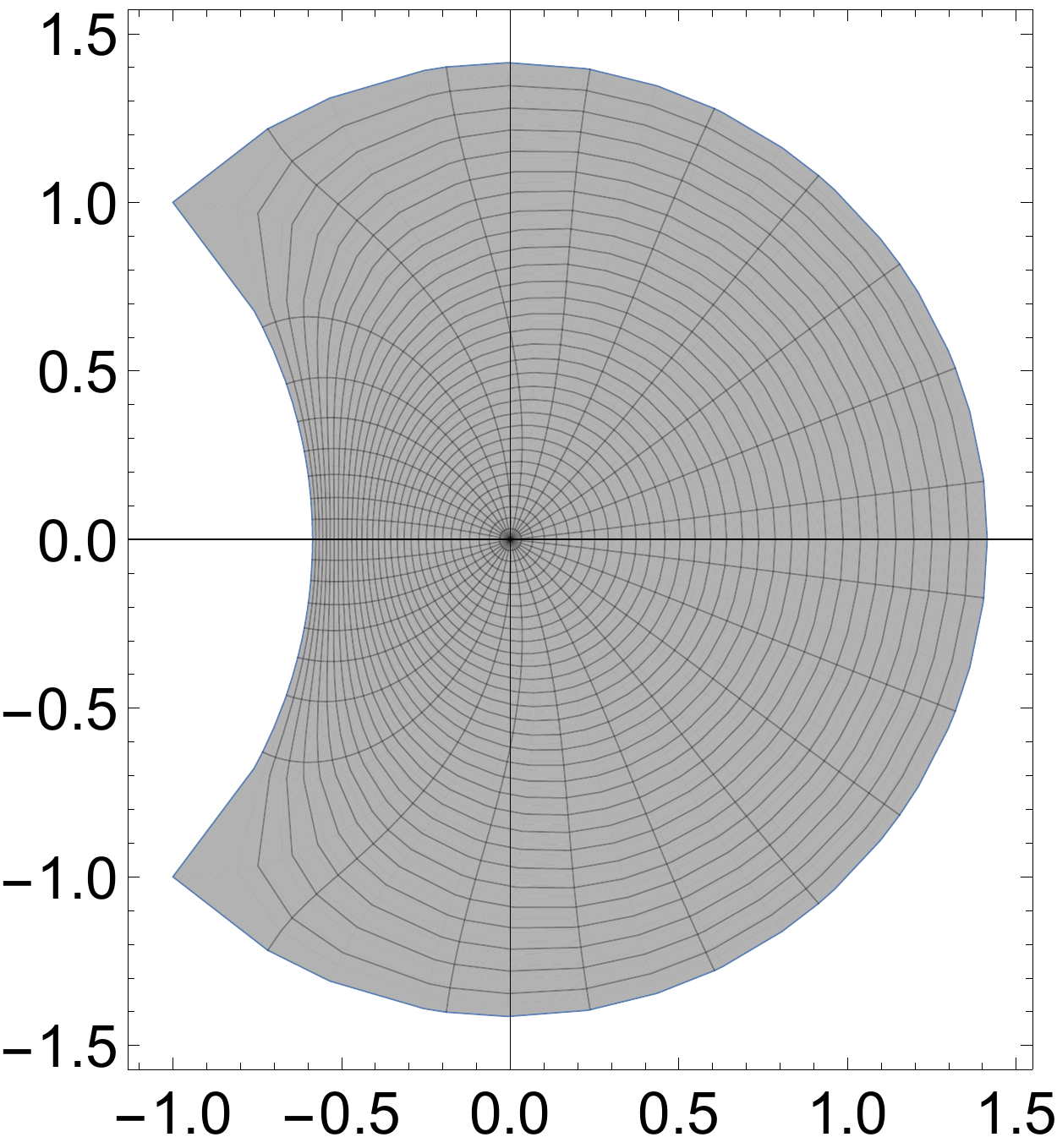} 
    \label{fig:subfig1}
}
\hspace*{7mm}
\subfigure[]{
\includegraphics[width=3.95cm]{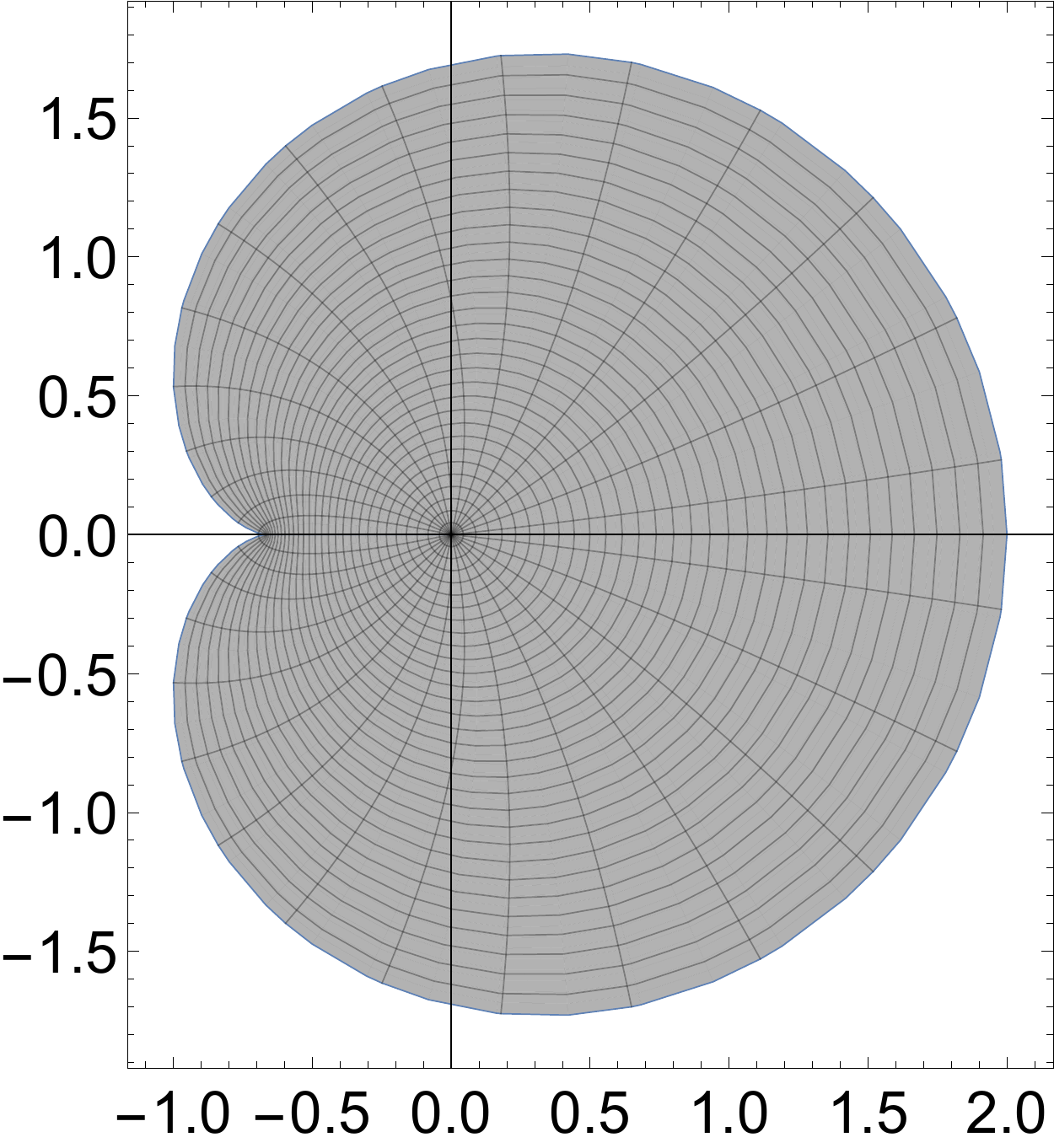} 
    \label{fig:subfig2}
}
\hspace*{7mm}
\subfigure[]{
\includegraphics[width=4.04cm]{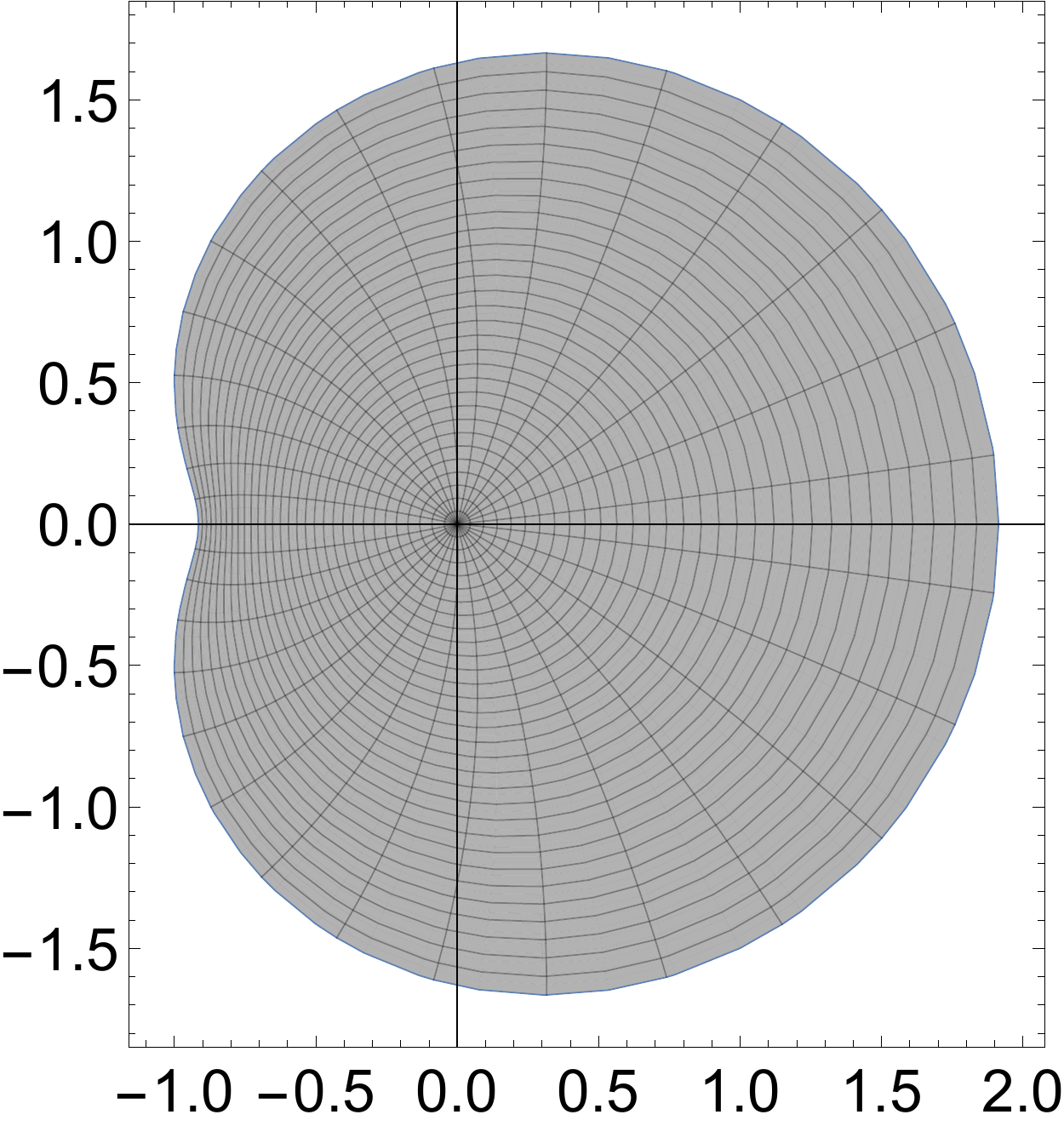} 
    \label{fig:subfig3}
}
\caption[The boundary curve of $\phi_j(\mathbb{B}^2)$, $j=2,3,4$]
{\subref{fig:subfig1}: The graph of $\phi_2(\mathbb{B}^2)$
 \subref{fig:subfig2}: The graph of $\phi_3(\mathbb{B}^2)$ for $\alpha=4/3$ and $\beta=2/3$
\subref{fig:subfig3}: The graph of $\phi_4(\mathbb{B}^2)$}
\end{figure}

\begin{example}\rm
Consider the univalent function $w=\phi_5(z)=z/(1-\alpha z^2)$, $z\in {\mathbb{B}^2}$, $0\leq \alpha<1$. This function $\phi_5$ maps the unit disk ${\mathbb{B}^2}$ onto the domain $D_5$, bounded by a Booth lemniscate.
We have
$$
\phi_5'(z)=\frac{1+\alpha z^2}{(1-\alpha z^2)^2},
$$
therefore,  for $w=\phi_5(z)$ we have
$$
r_{D_5}(w)=\left|\frac{1+\alpha z^2}{(1-\alpha z^2)^2}\right|(1-|z|^2)\le \psi(t^2):=\frac{1+\alpha t^2}{(1-\alpha t^2)^2}(1-t^2),\quad t=|z|.
$$
Now we will investigate the behavior of $\psi(\tau)$, $\tau\in [0,1]$.
Simple analysis shows that for $0\le \alpha\le 1/3$ the function  $\psi$ decreases on [0,1]. Therefore,  $\psi(t^2)\le \psi(0)=1$ and the maximal value of the conformal radius  $r_{D_5}(w)$ equals $1$; it is attained at the origin.

If $1/3<\alpha<1$, then $\psi$ has a unique point of maximum $\tau_\alpha=(3\alpha-1)/(\alpha(3-\alpha))$. Thus, in this case, the maximal value $m(\alpha)$ of the conformal radius  $r_{D_5}(w)$ equals
$\psi(\tau_\alpha)$, i.e.
$$
m(\alpha)=\frac{(1+\alpha)^2}{8\alpha(1-\alpha)}.
$$
It is attained at the points
$$
w_0=\pm\phi_5(\tau^2_\alpha)=\pm \frac{(3\alpha-1)(3-\alpha)}{(1-\alpha)(-\alpha^2+14\alpha-1)}.
$$

\end{example}


\section{Comparison of the conformal radius and the distance to the boundary for some domains}\label{compar}

It is easy to see that we can apply the above method, based on Lemma~\ref{segm}, to obtain sharp two-sided estimations of $2d_D(\cdot)/r_D(\cdot)$ for other convex planar domains $D$.

First consider  a bounded convex $n$-gon $P$ and its decomposition into parts:
$P=\cup_{j=1}^n P_j$ such that $z\in P_j$
if and only if $z$ is closer to the $j$-th side of $P$, than to others.
Then we investigate the set $\cup _{j=1}^n\partial P_j$. It is a graph
and after removing from it the boundary points  of $P$ and open, i.e. without endpoints, edges which have nonempty intersection with $\partial P$ we obtain its subgraph; denote it by $Gr(P)$.
For example, if we take as $P$  a rectangle $R=[-k,k]\times [-1,1]$,
then $Gr(P)$ consists of the segment $\Sigma_0$ (see Section~\ref{rectht}).
If $P$ admits an inscribed circle (a circle which is tangent to all sides of $P$),
then  $Gr(P)$ consists of the point which is the center of the circle.

Lemma~\ref{segm} immediately implies the following result.

\begin{thm}\label{ngon}
The maximum of $2d_P(u)/r_P(u)$, $u\in P$, is attained at some point of the graph $Gr(P)$.
\end{thm}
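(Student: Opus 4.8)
The plan is to reduce the claim to repeated application of Lemma~\ref{segm}, exactly as was done in the proof of Theorem~\ref{infinites} for the rectangle. First I would observe that $2d_P/r_P$ is a continuous function on the compact set $\overline{P}$ (continuity of $d_P$ is clear, and $r_P$ is continuous and bounded below by a positive constant on any compact subset of $P$; near $\partial P$ one uses $r_P \ge d_P$), so the maximum over $P$ is attained at some point $u_0 \in P$. The goal is to show we may take $u_0$ to lie in $Gr(P)$.

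The key step is the following dichotomy for the largest inscribed disk at a point. Fix any $u \in P$ and let $\delta = d_P(u)$; the circle centered at $u$ of radius $\delta$ touches $\partial P$ either at a single point $\zeta_0$ or at two or more points. If it touches at a single point $\zeta_0$, then $u$ lies in the interior of exactly one of the pieces $P_j$, and the open segment $(u,\zeta_0')$, where $\zeta_0'$ is the point of $\partial P$ on the line through $u$ perpendicular to the $j$-th side prolonged outward, meets $Gr(P)$ or $\partial P$; more directly, by hypothesis~(i) of Lemma~\ref{segm}, for every interior point $z_1$ of the segment $[u,\zeta_0]$ we have $d_P(z_1)/r_P(z_1) \le d_P(u)/r_P(u)$, so moving from $z_1$ toward $u$ does not decrease the ratio. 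Thus, starting from an arbitrary maximizer $u_0$, if its inscribed disk touches $\partial P$ at only one point, we may slide $u_0$ along the outward normal ray from the touching point, keeping the value of $2d_P/r_P$ at least as large, until the inscribed disk acquires a second touching point on a different side; such a second contact must occur before we exit $P$ because $P$ is a bounded polygon. (One must check that the value does not jump: the ratio is continuous along this motion, and the inequality from Lemma~\ref{segm}(i) applies to every intermediate position, so the supremum over the segment is attained at the moving endpoint where two sides are touched.) Hence we may assume the maximizer has its inscribed disk touching (at least) two sides, i.e. $u_0 \in \cup_{j=1}^n \partial P_j$.

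Next, if the inscribed disk at $u_0$ touches exactly two sides, say with the two contact points $\zeta_1, \zeta_2$ lying on consecutive sides meeting at a vertex $\zeta_0$ — which is the setting of hypothesis~(ii) of Lemma~\ref{segm} with $[a,\zeta_0]$ and $[\zeta_0,b]$ the relevant sides — then again for every interior point $z_1$ of $[u_0,\zeta_0]$ the ratio $d_P(z_1)/r_P(z_1)$ is no larger than at $u_0$. So we slide $u_0$ toward the vertex $\zeta_0$ along the bisector; the value stays at least as large, and we continue until a third side is touched. Iterating, after finitely many such slides (bounded by $n$, since each slide adds at least one new touching side) we arrive at a maximizer all of whose "one-sided'' and "two-consecutive-sided'' reductions have been exhausted; what remains is precisely a point of $Gr(P)$, since $Gr(P)$ was defined as the set of points of $\cup\partial P_j$ after deleting boundary points of $P$ and the open edges that meet $\partial P$ — that is, the locus of points whose inscribed disk touches $\partial P$ at two or more points that are not "removable'' by the sliding procedure. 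Conclude that $\max_{u\in P} 2d_P(u)/r_P(u)$ is attained on $Gr(P)$.

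I expect the main obstacle to be the bookkeeping that makes the "sliding'' rigorous: one must verify that along each slide the hypotheses of Lemma~\ref{segm} genuinely apply at the moving point (in particular that $\zeta_0$ stays the nearest boundary point or the shared vertex of the two touched sides), that the process terminates in finitely many steps, and that the limiting point indeed lies in $Gr(P)$ as the paper defines it rather than merely in its closure or in $\partial P$. The case analysis of how the two-touching-point configuration can degenerate — two contacts on the same side (impossible for the largest inscribed disk in a convex polygon, since the disk would then not be maximal unless the side is tangent, giving a single contact), or on parallel opposite sides (handled like the rectangle's $\Sigma_0$) — should be addressed, but each sub-case is short once the mechanism of Lemma~\ref{segm} is in place.
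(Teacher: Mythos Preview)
Your overall strategy is exactly what the paper has in mind: the paper's entire proof of this theorem is the single sentence ``Lemma~\ref{segm} immediately implies the following result,'' and the two-stage reduction you describe---first use part~(i) to land on the medial axis $\bigcup_j\partial P_j$, then use part~(ii) to get off the bisector edges emanating from vertices of $P$---is precisely how Lemma~\ref{segm} was deployed in the rectangle case in the proof of Theorem~\ref{infinites}.

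However, your second stage contains a direction error. You correctly quote the lemma: for $z_1$ in the open segment $(u_0,\zeta_0)$ one has $d_P(z_1)/r_P(z_1)\le d_P(u_0)/r_P(u_0)$. But you then write ``So we slide $u_0$ toward the vertex $\zeta_0$ along the bisector; the value stays at least as large.'' That is backwards: points between $u_0$ and the vertex $\zeta_0$ have \emph{smaller} ratio, and sliding toward $\zeta_0$ leads you to $\partial P$, never to a point where a third side is touched. The correct move---parallel to what you did correctly in stage~(i)---is to slide \emph{away} from $\zeta_0$ along the bisector: for any point $z_0'$ on that bisector edge beyond $u_0$, the inscribed disk at $z_0'$ still touches the same two adjacent sides, so Lemma~\ref{segm}(ii) applied with $z_0=z_0'$ and $z_1=u_0$ gives $d_P(u_0)/r_P(u_0)\le d_P(z_0')/r_P(z_0')$. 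Taking $z_0'$ to be (or to approach) the interior endpoint of that bisector edge---a vertex of the medial axis, hence a point of $Gr(P)$---completes the argument. No further ``iteration'' is needed: after these two slides you are already in $Gr(P)$, and if the two touching sides were non-adjacent to begin with you were in $Gr(P)$ from the start.

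A minor side remark: your preliminary existence argument is not quite right as stated (the ratio $2d_P/r_P$ is undefined on $\partial P$, so compactness of $\overline P$ does not directly give a maximizer), but it is also unnecessary. Once the direction in stage~(ii) is fixed, the sliding argument itself shows that $\sup_{u\in P}2d_P(u)/r_P(u)$ is dominated by the supremum over the compact set $Gr(P)\subset P$, where the ratio is continuous and therefore attains its maximum.
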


Analyzing some concrete domains and classes of domains (see the examples below), we can suggest the following conjecture.

\begin{conjecture}\label{convertex}
The maximum of $2d_P(u)/r_P(u)$, $u\in P$, is attained at a vertex of the graph $Gr(P)$.
\end{conjecture}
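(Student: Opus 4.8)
The plan is to refine Theorem~\ref{ngon} by analysing the function $g(u):=2d_P(u)/r_P(u)$ edge by edge on the graph $Gr(P)$. By Theorem~\ref{ngon} the maximum of $g$ over $P$ is attained at some $u_0\in Gr(P)$, so it suffices to show that $g$ cannot attain its maximum in the relative interior of an edge; then $u_0$ is forced to be one of the finitely many vertices. Here the vertices of $Gr(P)$ are precisely the points equidistant from at least three sides, while the relative interior of an edge consists of points equidistant from exactly two sides $S_1,S_2$ and lying on their bisector. A structural observation I would record first is that the two sides defining a \emph{surviving} edge of $Gr(P)$ are necessarily non-adjacent. Indeed, the bisector of two adjacent sides emanates from their common vertex $V\in\partial P$, and on the portion before the first branch point the two nearest sides are exactly $S_1,S_2$; this portion is a spoke attached to the boundary and is deleted when passing from $\cup_j\partial P_j$ to $Gr(P)$. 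This is exactly what happens for the rectangle, where $\Sigma_1,\dots,\Sigma_4$ disappear and only $\Sigma_0$, the bisector of the two parallel sides, remains.

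Along such an edge I would parametrise $u=u(t)$ by arclength. Since $u$ is equidistant from the two lines supporting $S_1$ and $S_2$, the distance $d_P(u(t))$ is an affine function of $t$ (constant when $S_1,S_2$ are parallel, as for the rectangle). Hence the whole content lies in the behaviour of $r_P(u(t))$, or equivalently of the density $2/r_P$ of the hyperbolic metric $\rho_P$, and the goal reduces to showing that $g(u(t))=d_P(u(t))\cdot\bigl(2/r_P(u(t))\bigr)$ has no interior local maximum. The cleanest formulation I would aim for is that $t\mapsto g(u(t))$ is convex, or at least quasi-convex, so that its maximum occurs at an endpoint of the edge; for this I would try to combine the affinity of $d_P$ with a convexity-type estimate for the conformal radius of a convex domain along an interior segment.

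The main obstacle is that Lemma~\ref{segm}, the only monotonicity tool presently available, does not apply to the edges that actually occur in $Gr(P)$. Its case~(ii) yields monotonicity of $d/r$ along the bisector of two sides only when those sides meet at a boundary point $\zeta_0$, because the proof scales the domain about $\zeta_0\in\partial D$ and uses $D\subset\alpha D$. For a surviving edge the two sides are non-adjacent, their supporting lines meet at an apex $V$ lying outside $P$ (or at infinity), and scaling $P$ about $V$ destroys the inclusion $P\subset\alpha P$ on which the argument rests; thus displacing a point along such an edge is precisely the motion Lemma~\ref{segm} cannot control. Closing this gap seems to require a genuinely new ingredient, for example: (a) enlarging $P$ to a convex domain in which $V$ becomes a boundary vertex and quantitatively controlling the induced change of $r_P$; (b) a direct sign estimate for $(\log g)''$ along the bisector extracted from the Schwarz--Christoffel map of $P$; or (c) a symmetrisation or extremal-length comparison forcing $r_P$ to grow fast enough relative to the affine $d_P$. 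In the rectangle this difficulty is bypassed only because the uniformiser is the explicit Jacobi function $\sn$, which makes $r_R$ manifestly decreasing toward $\pm(k-1)$; reproducing such an explicit computation for an arbitrary convex polygon is exactly what is missing, and is why the statement is recorded here as a conjecture rather than a theorem.
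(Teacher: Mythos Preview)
Your write-up is not a proof, and you correctly recognise this in its closing sentence: the statement is recorded in the paper as Conjecture~\ref{convertex}, with no proof given. The paper merely states the conjecture after Theorem~\ref{ngon}, motivates it by the examples in Section~\ref{compar}, and leaves it open. So there is no ``paper's own proof'' to compare against.

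That said, your analysis of the difficulty is accurate and well aligned with the paper's viewpoint. Your structural observation that the surviving edges of $Gr(P)$ are bisectors of non-adjacent sides is correct (the leaf edges of the medial-axis tree are precisely the bisectors at vertices of $P$, and these are the ones deleted). Your diagnosis of why Lemma~\ref{segm} fails is exactly right: its scaling argument requires the apex $\zeta_0$ to lie on $\partial D$, which is never the case for the bisector of two non-adjacent sides of a convex polygon. The paper's proof of Theorem~\ref{infinites} sidesteps this by exploiting the explicit $\sn$-uniformiser of the rectangle to show $r_R$ is even and monotone on $[0,k]$; as you note, no analogue is available for a general convex polygon, and this is indeed the missing ingredient. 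In short, you have not proved the conjecture, but you have correctly located the gap, and the paper does not close it either.
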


\begin{rem}\label{infin}
We note that the statement of Theorem~\ref{ngon} is also valid for  most unbounded convex polygonal domains $P$, excluding those with degenerate (empty) $Gr(P)$ such as strips and  infinite sectors. Actually, if $P$ is unbounded, then $\partial P$ contains two rays, $L_1$ and $L_2$. If the rays are not parallel, for simplicity, we can assume that their continuations intersect at the origin. Consider the set $\mathcal{A}$ of points in $P$ such that maximal disks centered at these points and contained in $P$ touch both the rays; it is also a ray coming from a point $z_0$. Then applying the mappings $z\mapsto \alpha z$, $\alpha>0$, we show as in the proof of Lemma~\ref{segm} that when approaching the origin by the set $\mathcal{A}$ the value of  $2d_P(u)/r_P(u)$  increases. Therefore the maximal value of $2d_P(u)/r_P(u)$, $u\in \mathcal{A}$, is attained at the vertex $z_0$ of $Gr(P)$. If $L_1$ and $L_2$ are parallel, then the ray $\mathcal{A}$  is parallel to them and we can apply shifts $z\mapsto z+h$ instead of the mappings $z\mapsto \alpha z$, $\alpha>0$.
\end{rem}


\begin{nonsec}{\bf Examples.}\smallskip

1) {\bf Convex sector} $S_\gamma:=\{z\in\mathbb{C}: 0<\arg z<\gamma\}$, $0<\gamma<\pi$;
The upper estimate for the value of $2d_{S_\gamma}(\cdot)/r_{S_\gamma}(\cdot)$ is attained at every point of its bisector.
 The function $g(z) = (z^{{\pi}/{\gamma}}-i)/(z^{{\pi}/{\gamma}}+i)$
  maps the sector $S_\gamma$ onto the unit disk $\mathbb{B}^2$ so
  that the point $e^{{i\gamma}/{2}}$ goes to the origin.
  Using the formula \eqref{crad1}, we obtain that
$$
r_{S_\gamma}\bigl(e^{{i\gamma}/{2}}\bigr)=
\frac{1}{\bigl|g'(e^{{i\gamma}/{2}})\bigr|} = \frac{2\gamma}{\pi}.
$$
Therefore,
$$\frac{2d_{S_\gamma}\bigl(e^{{i\gamma}/{2}}\bigr)}
{r_{S_\gamma}\bigl(e^{{i\gamma}/{2}}\bigr)} = \frac{\pi\sin(\gamma/2)}{\gamma}.$$
If $\gamma$ tends to $\pi-$, then $S_\gamma$ tends to the half-plane and ${2d_{S_\gamma}\bigl(e^{{i\gamma}/{2}}\bigr)}/
{r_{S_\gamma}\bigl(e^{{i\gamma}/{2}}\bigr)} \rightarrow 1$. If we allow $\gamma \to 0+$, then  ${2d_{S_\gamma}\bigl(e^{{i\gamma}/{2}}\bigr)}/
{r_{S_\gamma}\bigl(e^{{i\gamma}/{2}}\bigr)}$ tends to $\pi/2$; this constant corresponds to the case of the strip $\{z:\left|\Im{z}\right|<\pi/{2}\}$.

\medskip

2)  {\bf Isosceles triangles.} For an arbitrary triangle $\Delta$, the upper bound of $2d_{\Delta}(\cdot)/r_{\Delta}(\cdot)$ is attained at the center of its inscribed circle.
Here we describe the situation for the case of an isosceles triangle $\Delta_\alpha$ with vertices $1$, $\pm i\cot(\alpha\pi)$ and angles $(1-2\alpha)\pi$, $\alpha\pi$ and $\alpha\pi$, where $\alpha\in(0,1/2)$.
The conformal mapping of the right half plane onto $\Delta_\alpha$ is given by
$$
G_\alpha(z)=\frac{2}{\sqrt{\pi}}\,\frac{\Gamma(1-\alpha)}{\Gamma(1/2-\alpha)}
\int_0^z\frac{dt}{(1+t^2)^{1-\alpha}}
=\frac{2z}{\sqrt{\pi}}\,\frac{\Gamma(1-\alpha)}{\Gamma(1/2-\alpha)}\,{}_2F_1
\Bigl(\frac{1}{2},1-\alpha;\frac{3}{2};-z^2\Bigr).
$$
Here ${}_2F_1$ is the Gaussian hypergeometric function \cite[Ch 1]{avv}. The center of the inscribed circle for $\Delta_\alpha$ is at the point $(1-\tan^2(\alpha\pi/2))/2$ and its preimage $x=x(\alpha)$ under the mapping $G_\alpha$  can be found from the equation
$$
\frac{2x}{\sqrt{\pi}}\,\,\frac{\Gamma(1-\alpha)}{\Gamma(1/2-\alpha)}\,{}_2F_1
\Bigl(\frac{1}{2},1-\alpha;\frac{3}{2};-x^2\Bigr)\,=\frac{1}{2}\,
\Bigl(1-\tan^2\frac{\alpha\pi}{2}\Bigr).
$$
Then the maximal value of
$2d_{\Delta_\alpha}(\cdot)/r_{\Delta_\alpha}(\cdot)$ is equal to
$$
M_1(\alpha):=\frac{\sqrt{\pi}(1-\tan^2({\alpha\pi/}{2}))
\Gamma(1/2-\alpha)(1+x(\alpha)^2)^{1-\alpha}}{4\Gamma(1-\alpha)x(\alpha)}.
$$
From the graph of $M_1(\alpha)$ (see Fig.~\ref{figM3} and Remark~\ref{M123}) we can see  
that the maximal value of
$M_1(\alpha)$ is attained at the point $\alpha_0=1/3$ corresponding to the case of regular triangle; possibly this can be
proved strictly analytically.  The corresponding value is
$$
M_1(\alpha_0)= \frac{2^{1/3} \sqrt{\pi}\Gamma(1/6)}{3\sqrt{3}\Gamma(2/3)}=1.76663875\ldots.
$$
In Table \ref{Tab:2}, we give some values of
$M_1(\alpha)$, where $\alpha\in(0,1/2)$.
\begin{table}[!htb]
\centering
\caption{Some values of $M_1(\alpha)$, where $\alpha\in(0,1/2)$} \label{Tab:2}
\scalebox{0.85}{
 \begin{tabular}{rr}
\hline\noalign{\smallskip}
 $\alpha$\phantom{A} & $M_1(\alpha)$\phantom{AAA} \\
\noalign{\smallskip}\hline\noalign{\smallskip}\noalign{\smallskip}\noalign{\smallskip}
$\to 0+\phantom{a}$ & $\to ({\pi}/{2})=1.5707963267\ldots$ \\[2mm]
${1}/{12}\phantom{A}$ & $1.6527823736\ldots$ \\[2mm]
${1}/{6}\phantom{A}$  & $1.7147087445\ldots$ \\[2mm]
${1}/{4}\phantom{A}$ & $1.7534053628\ldots$ \\[2mm]
${5}/{12}\phantom{A}$  & $1.7531525258\ldots$ \\[2mm]
$\to (1/2)-$  & \phantom{AAA}$\to ({\pi}/{2}) \coth(\pi/2)=1.7126885749\ldots$ \\
\noalign{\smallskip}\hline
\end{tabular}}
\end{table}

We note that $\alpha\to 0+$ gives the case of a strip and $\alpha\to (1/2)-$ gives the case of a half-strip, see the next example.

\smallskip

3)  {\bf Half-strip.}
$\Omega_a:=\{z: |\Re z|< a$\, {\rm and}\, $\Im z>0, \, 0<a\in \mathbb{R} \}$;
The upper estimate $2d_{\Omega_a}(\cdot)/r_{\Omega_a}(\cdot)$ is attained
at the intersection point of the bisectors of the two angles at its  two
finite vertices.
The function mapping the half-strip $\Omega_a$ onto the upper half-plane is $g(z)=\sin (\pi z/2a)$. The intersection point of the bisectors is $ia$. It follows from \eqref{crad2} that
\begin{equation*}
  r_{\Omega_a}(ia)=\frac{2\Im g(ia)}{|g'(ia)|}=\frac{2\sinh(\pi/2)}{(\pi/2a)\cosh(\pi/2)}=\frac{4a}{\pi}\tanh(\pi/2).
\end{equation*}
Therefore,
\begin{equation*}
  \frac{2d_{\Omega_a}(ia)}{r_{\Omega_a}(ia)}=\frac{\pi}{2}\coth(\pi/2)=:M_0\approx 1.712689.
\end{equation*}
\medskip


4)  {\bf Arbitrary convex triangle with a vertex at infinity.}
 Let $\Lambda_\alpha$ be a symmetric triangle bounded with the segment
$[-i,i]$ and two rays going from the points $\pm i$ and forming the
angles $\alpha\pi$ with the segment, where $\alpha\in(1/2,1)$.
The upper estimate for $2d_{\Lambda_\alpha}(\cdot)/r_{\Lambda_\alpha}(\cdot)$
is attained at the intersection point of the bisectors of the two angles at
its two finite vertices.
The function $P_\alpha$ mapping the right half plane onto $\Lambda_\alpha$ has the form
$$
P_\alpha(z)=\frac{2}{\sqrt{\pi}}\,\frac{\Gamma(1/2+\alpha)}{\Gamma(\alpha)}
\int_0^z\frac{{\rm d}t}{(1+t^2)^{1-\alpha}}
=\frac{2z}{\sqrt{\pi}}\,\frac{\Gamma(1/2+\alpha)}{\Gamma(\alpha)}\,{}_2F_1
\Bigl(\frac{1}{2},1-\alpha;\frac{3}{2};-z^2\Bigr).
$$
The intersection point of the bisectors of the angles at the finite vertices is at the point $\tan(\alpha\pi/2)$, therefore we can find $x=x(\alpha)$ which is the preimage of $\tan(\alpha\pi/2)$ under the mapping $P_\alpha$ from the equation
$$
\frac{2x}{\sqrt{\pi}}\,\frac{\Gamma(1/2+\alpha)}{\Gamma(\alpha)}\,{}_2F_1
\Bigl(\frac{1}{2},1-\alpha;\frac{3}{2};-x^2\Bigr)=\tan\left(\frac{\alpha\pi}{2}\right).
$$
Then the maximal value of $2d_{\Lambda_\alpha}(\cdot)/r_{\Lambda_\alpha}(\cdot)$ is equal to
$$
M_2(\alpha):=\frac{\sqrt{\pi}\tan({\alpha\pi/}{2})\Gamma(\alpha)(1+x(\alpha)^2)^{1-\alpha}}
{2\Gamma(1/2+\alpha)x(\alpha)}.
$$
Some values of $M_2(\alpha)$ also are given in Table \ref{Tab:3}:
\begin{table}[!htb]
\centering
\caption{Some values of $M_2(\alpha)$, where $\alpha\in(1/2,1)$} \label{Tab:3}
\scalebox{0.85}{
 \begin{tabular}{lc}
\hline\noalign{\smallskip}
 $\alpha$ & $M_2(\alpha)$ \\
\noalign{\smallskip}\hline\noalign{\smallskip}\noalign{\smallskip}\noalign{\smallskip}
${2}/{3}$ & \phantom{A}$1.554662095759\ldots$ \\[2mm]
${3}/{4}$  & \phantom{A}$1.441224578770\ldots$ \\[2mm]
${5}/{6}$ & \phantom{A}$1.308765658869\ldots$ \\
\noalign{\smallskip}\hline
\end{tabular}}
\end{table}
\begin{rem}\label{M123}
We can combine Examples 2) and 4) above by defining $M_3(\alpha)$ as follows for all $\alpha\in(0,1)$:
\begin{equation*}
  M_3(\alpha):=\left\{
     \begin{array}{rl}
       M_1(\alpha), & \hbox{$0<\alpha<\frac{1}{2}$;} \\[1mm]
       M_0, & \hbox{$\alpha=\frac{1}{2}$;} \\[1mm]
       M_2(\alpha), & \hbox{$\frac{1}{2}<\alpha<1$.}
     \end{array}
   \right.
\end{equation*}
Fig. \ref{figM3} shows the graph of $M_3(\alpha)$ for $\alpha\in(0,1)$. 
\end{rem}
\begin{figure}[ht] \centering
\includegraphics[width=3. in]{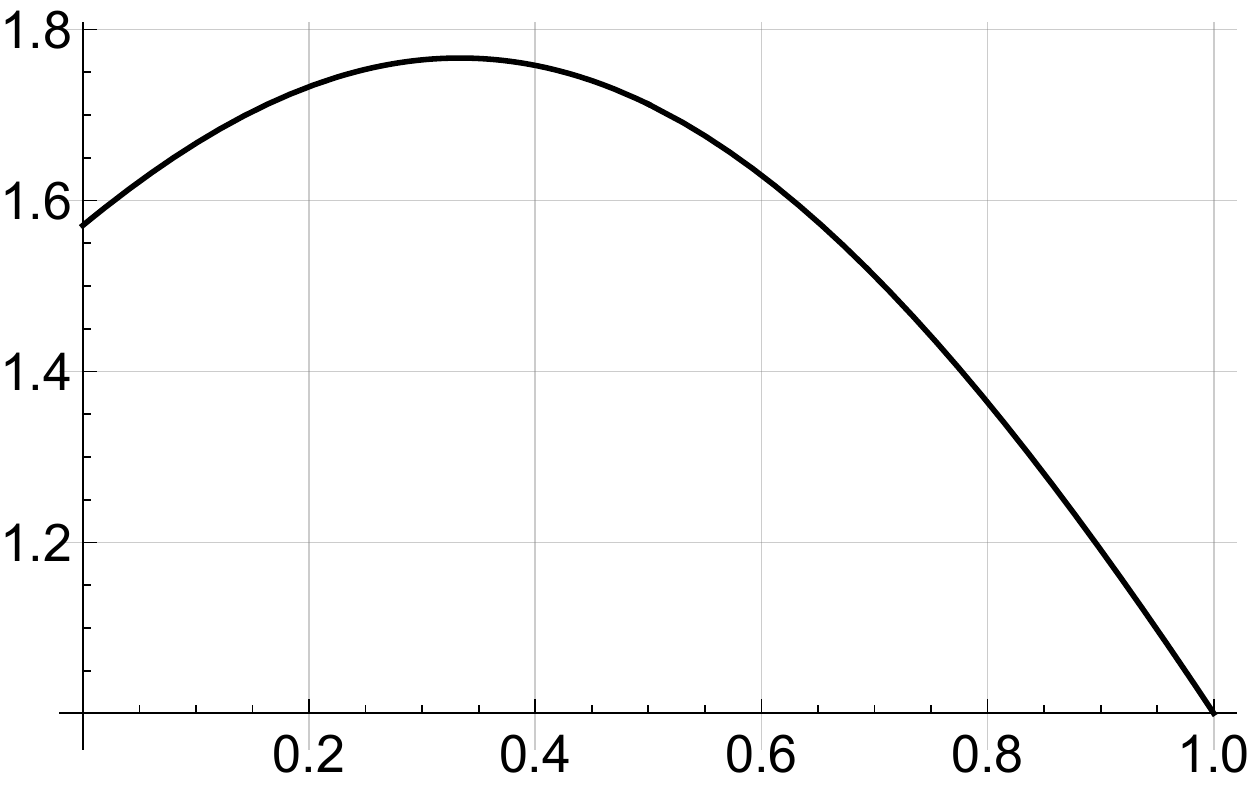}
\caption{The graph of $M_3(\alpha)$, where $\alpha\in(0,1).$}\label{figM3}
\end{figure}


5)  {\bf Rhomb.} Let $\Pi$ be a rhomb with acute angle $\delta$ and length of side $1$. The upper estimate of $2d_{\Pi}(\cdot)/r_{\Pi}(\cdot)$ is attained at its center.
From \cite[Ch.1, \S~1.22]{ps} we have that the conformal radius at its center is equal to
$$
r_\Pi(0)=\frac{4\sqrt{\pi}}{\Gamma(\frac{\delta}{2\pi})\Gamma(\frac{\pi-\delta}{2\pi})}.
$$
Since the distance from the center to the boundary is equal $d_\Pi(0)=(1/2)\sin\delta$, we obtain
$$
\frac{2d_\Pi(0)}{r_\Pi(0)}=\frac{\sin\delta}{4\sqrt{\pi}}\,\Gamma
\left(\frac{\delta}{2\pi}\right)\Gamma\left(\frac{\pi-\delta}{2\pi}\right)
=\frac{\pi\sqrt{\pi}}{2\Gamma\left(\frac{2\pi-\delta}{2\pi}\right)
\Gamma\left(\frac{\pi+\delta}{2\pi}\right)}=:\phi(\delta).
$$
This value tends to
$$
\frac{\Gamma^2(1/4)}{4\sqrt{\pi}}= 1.854074677\ldots,
$$
(the case of a square, cf. \eqref{l0}), as $\delta\to \pi/2$, and to $\pi/2$ (the case of a strip), as $\delta\to 0$.
It is easy to see that the function $\phi(\delta)$ is positive on $(0,\pi/2)$ and
$$
\frac{\phi'(\delta)}{\phi(\delta)}\,=\frac{1}{2\pi}\,\left[\psi\left(\frac{2\pi-\delta}{2\pi}\right)
-\psi\left(\frac{\pi+\delta}{2\pi}\right)\right],
$$
where $\psi(x)=\frac{d}{dx}\log\Gamma(x)$ is the digamma function. Since $\psi(x)$ increases on $(0,+\infty)$ and $(2\pi-\delta)/(2\pi)>(\pi+\delta)/(2\pi)$, $0<\delta<\pi/2$ we see that $\phi(\delta)$ increases on $(0,\pi/2)$.

The graph of $\phi(\delta)$ is given on Fig. \ref{rhomb}.
\begin{figure}[ht] \centering
\includegraphics[width=3. in]{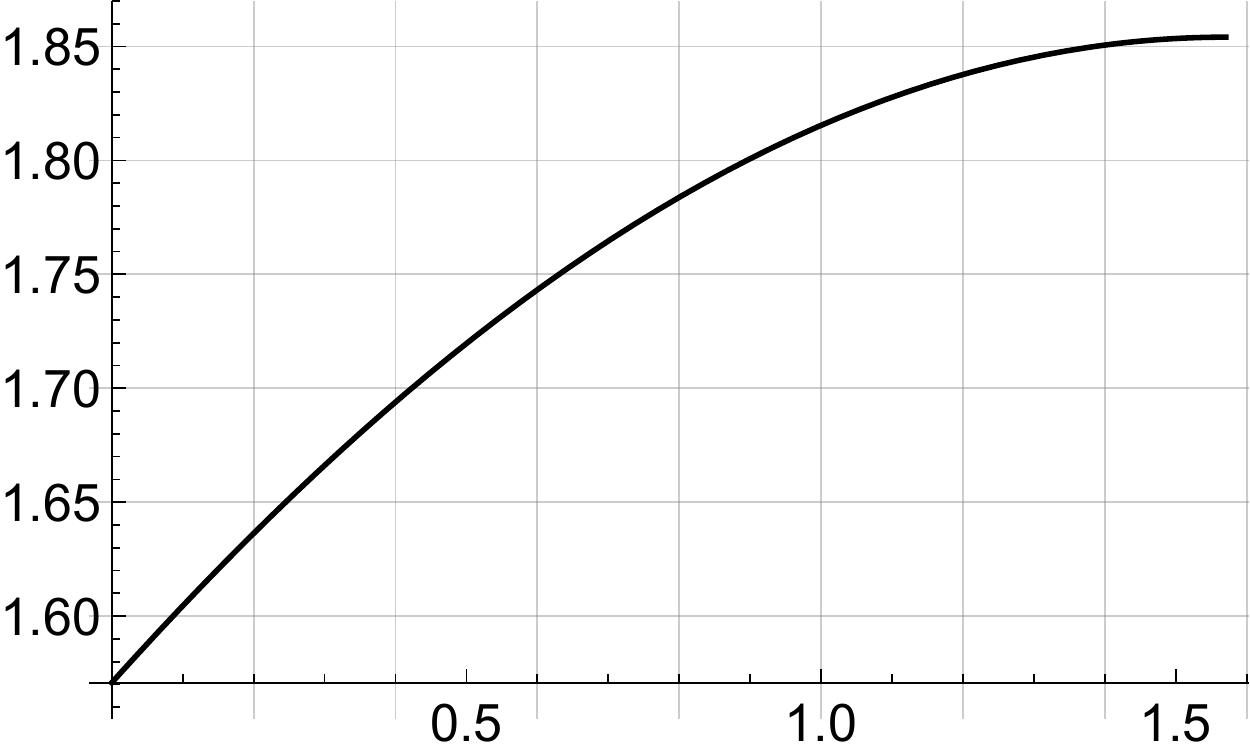}
\caption{The graph of $\phi(\delta)$.}\label{rhomb}
\end{figure}

6)  {\bf Trapezoid.} Let us denote an arbitrary trapezoid by
$\mathrm{T}$.\smallskip

a) For long trapezoids, the maximum of
$2d_\mathrm{T}(\cdot)/r_\mathrm{T}(\,\cdot\,)$ 
is attained at its midsegment, more precisely, between the intersection
points of bisectors of the angles at two adjacent vertices,
lying on different parallel sides. Most likely,  the maximum is attained at one of
the intersection points of its bisectors.
This case requires additional investigation.\smallskip

b) For short trapezoids, the maximum of $2d_\mathrm{T}(\cdot)/r_\mathrm{T}(\cdot)$ 
is attained at the segment joining the intersection points of
 bisectors of the angles at two adjacent vertices,
 lying on different parallel sides.
This segment is on the bisector of the angle formed by the
straight lines, containing the nonparallel sides of the trapezoid.
Most likely,  the maximum is attained at one of the intersection points of the
bisectors described above. This case also requires additional
investigation.\smallskip

c) The transitional case is when the considered trapezoid
has an inscribed circle. Then the upper estimate is attained at
its center.\smallskip

7)  {\bf Arbitrary convex quadrilateral.}  The situations is close to the case of trapezoids. The only difference that, in the non-trapezoidal case, instead of a part of midsegment we consider the segment of bisector,  which form the straight lines, containing the corresponding pair of nonparallel sides.\smallskip

8)  {\bf Regular $n$-gon.} Let $\mathrm{N}$ be a regular $n$-gon with the side length $a$. The upper estimate of $2d_\mathrm{N}(\cdot)/r_\mathrm{N}(\cdot)$ is attained at its center.
Again, from \cite[Ch.1, \S~1.22]{ps} we have that the conformal radius
at its center is equal to
$$
r_\mathrm{N}(0)=\dfrac{na\Gamma\left(1-\frac{1}{n}\right)}{2^{1-\frac{2}{n}}
\Gamma\left(\frac{1}{2}\right)\Gamma\left(\frac{1}{2}-\frac{1}{n}\right)}.
$$
The distance from the center to the boundary is equal to the radius of the inscribed circle:
$$d_\mathrm{N}(0) = \dfrac{a}{2\tan\frac{\pi}{n}}.$$
Thus we obtain

$$\dfrac{2d_\mathrm{N}(0)}{r_\mathrm{N}(0)} = \dfrac{2^{1-\frac{2}{n}}\Gamma\left(\frac{1}{2}\right)\Gamma\left(\frac{1}{2}
-\frac{1}{n}\right)}{n\tan\frac{\pi}{n}\Gamma\left(1-\frac{1}{n}\right)}
={\textstyle\frac{1}{n}}\,2^{1-\frac{2}{n}}B({\textstyle\frac{1}{n},\frac{1}{2}}).
$$
This value grows as $n$ increases  and tends to $2$, as $n$ tends to infinity  (the case of a circle).
For $n$ equal to $3$, $4$ and $6$ this formula gives values for equilateral triangle, square and regular hexagon, respectively.\smallskip

9)  {\bf Polygons having inscribed circles.}  The upper estimate is attained at the center of the inscribed circle.\medskip



Now we will estimate the ratio $2d_{D_1}(\cdot)/r_{D_1}(\cdot)$ and $2d_{D_2}(\cdot)/r_{D_2}(\cdot)$  for non-polygonal domains described in Examples~\ref{ex1} and~\ref{ex2}, respectively.\smallskip


  10)  {\bf The domain $D_1$} which is  the image of the unit disk under the mapping $w=\phi_1(z)=\sqrt{1+z}-1$ 
  is convex.  From Lemma~\ref{segm} it follows that  the maximum of the ratio $2d_{D_1}(w)/r_{D_1}(w)$, $w\in D_1$ is attained at some point $w$ which is real, i.e. $-1<w<\sqrt{2}-1$.
  Now we will find  $d_{D_1}(w)$ for such $w$.

  Denote $t=w+1$, $0<t<\sqrt{2}$. Then
  $$\sqrt{1+e^{i\theta}}=e^{i\theta/4}\sqrt{2\cos(\theta/2)}, \quad -\pi/2\le \theta\le \pi/2
  $$ and
  $$
A(\theta):=|\phi_1(e^{i\theta})-w|^2=|\sqrt{1+e^{i\theta}}-t|^2
=t^2-2\sqrt{2}t\cos(\theta/4)\sqrt{\cos(\theta/2)}+2\cos(\theta/2).
  $$
If we put $\tau=\cos(\theta/4)$, then we obtain $A(\theta)=f(\tau)$, where
$$
f(\tau)=t^2-2\sqrt{2}t\tau\sqrt{2\tau^2-1}+2(2\tau^2-1),\quad \sqrt{2}/2\le|\tau|\le 1.
$$
Analyzing $f(\tau)$, we find that for $2\sqrt{2}/3\le t<\sqrt{2}$ we have $f(\tau)\ge f(1)=(\sqrt{2}-t)^2$, therefore,  the distance from $w$ to the boundary of $D_1$ equals $d_{D_1}(w)=\sqrt{2}-t$. 
If $0<t<2\sqrt{2}/3$, then  $f(\tau)$ attains its minimum at the point $\tau_0$ such that
\begin{equation}\label{tau0}
\tau_0^2=\frac{1}{4}\left(1+\frac{1}{\sqrt{1-t^2}}\right)
\end{equation}
and then $d_{D_1}(w)=\sqrt{\sqrt{1-t^2}-(1-t^2)}$. Therefore,
\begin{equation}\label{dd1}
d_{D_1}(w)=\left\{
             \begin{array}{ll}
                \sqrt{\sqrt{1-t^2}-(1-t^2)},& 0<t<2\sqrt{2}/3,\\
               \sqrt{2}-t, & 2\sqrt{2}/3\le t<\sqrt{2},
             \end{array}
           \right.
\end{equation}
where $t=1+w$.

Let $\tau_0=\cos(\theta_0/4)$.  Analyzing \eqref{tau0}, we see
that when $t$ increases from $0$ to  $2\sqrt{2}/3$ the point $\theta_0$
decreases from $\pi$ to $0$. Therefore, for every boundary point
$\phi_1(\theta)$, $0< \theta<\pi$,  there is $t\in[0,2\sqrt{2}/3]$
such that the maximal circle in $D_1$ centered at the point
$w=t-1$ touch $\phi_1(\theta)$. Because of the symmetry of $D_1$
with respect to the real axis, a similar fact is valid for
$-\pi< \theta<0$. Then from Lemma~\ref{segm} we conclude that
the maximal value of $2d_{D_1}(w)/r_{D_1}(w)$, $w\in D_1$, is
attained at some point of the real axis.

For the conformal radius at the points $w=\phi_1(z)$, $-1<z<1$, we have $r_{D_1}(w)=(1/2)(1-z)^{1/2}(1+z)$. Taking into account that $z=(w+1)^2-1=t^2-1$ we have
\begin{equation}\label{rd1}
r_{D_1}(w)=(1/2)t(2-t^2).
\end{equation}

Simple analysis of the function $2d_{D_1}(w)/r_{D_1}(w)$ with the help of \eqref{dd1} and \eqref{rd1} shows that it has a unique maximum at the point $w_0=\sqrt{1-u_0^2}-1=-0.109718\ldots$, where $u_0=0.45541\ldots$ is  a unique root of the cubic equation $4u^3+3u^2-1=0$.
The maximum equals $1.85318\ldots$. Therefore, in $D_1$ we have the sharp inequality
\begin{equation*}
\frac{2d_{D_1}(w)}{r_{D_1}(w)}\le 1.85318\ldots, \quad w\in D_1.
\end{equation*}

 11)  {\bf The domain $D_2$} which is  the image of the unit disk under the mapping $w=\phi_2(z)=z+\sqrt{1+z^2}-1$. 
 We  will also find the maximum of the ratio $2d_{D_2}(w)/r_{D_2}(w)$ but, because of the fact that this domain is not convex, we will confine ourselves to considering the case of real $w$.

For real $w=\phi_2(z)$, we have
$$
r_{D_2}(w)=\frac{z+\sqrt{1+z^2}}{\sqrt{1+z^2}}(1-z^2).
$$
Since the disk $\{|w-(\sqrt{2}-1)|<1\}$ is a subset of $D_2$ and it contains two its boundary points $\sqrt{2}-2$ and  $\sqrt{2}$, we have for $\sqrt{2}-2<w<\sqrt{2}$:
$$
d_{D_2}(w)=\left\{
             \begin{array}{ll}
               w-(\sqrt{2}-2),& \sqrt{2}-2<w<\sqrt{2}-1,\\[2mm]
               \sqrt{2}-w, & \sqrt{2}-1\le w<\sqrt{2}.
             \end{array}
           \right.
$$
If we denote $t=w+1$, then
$$
r_{D_2}(w)=\frac{6t^2-t^4-1}{2(t^2+1)},
$$
$$
d_{D_2}(w)=\left\{
             \begin{array}{ll}
               t-(\sqrt{2}-1),& \sqrt{2}-1<t<\sqrt{2},\\[2mm]
               \sqrt{2}+1-t, & \sqrt{2}\le t<\sqrt{2}+1
             \end{array}
           \right.
$$
and
\begin{equation}\label{drd2}
\frac{2d_{D_2}(w)}{r_{D_2}(w)}=B(t):=\left\{
             \begin{array}{ll}
               \displaystyle\frac{(t-(\sqrt{2}-1))(t^2+1)}{6t^2-t^4-1},& \sqrt{2}-1<t<\sqrt{2},\\[4mm]
               \displaystyle\frac{((\sqrt{2}+1)-t)(t^2+1)}{6t^2-t^4-1}, & \sqrt{2}\le t<\sqrt{2}+1,
             \end{array}
           \right.
\end{equation}
From \eqref{drd2} it is easy to see that $B(t)$ increases on $[\sqrt{2}-1,\sqrt{2}]$ and decreases on $[\sqrt{2},\sqrt{2}+1]$. Therefore, the maximum is attained at $t=\sqrt{2}$ and is equal to $B(\sqrt{2})=12/7=1.71429\ldots$.

\end{nonsec}

\noindent
{\bf Acknowledgment.} Funding for the second author was provided by Turku University Foundation. The work of the third author performed under the development program of the Volga Region Mathematical Center (agreement no. 075-02-2022-882).



\bibliographystyle{siamplain}


\end{document}